\documentclass[a4paper,12pt]{amsart}

\usepackage{amssymb}
\usepackage{amsfonts}
\usepackage{amsmath}
\usepackage{mathrsfs}
\usepackage{amsthm}
\usepackage{enumerate}
\usepackage{hyperref}
\usepackage[utf8]{inputenc}
\usepackage[T1]{fontenc}
\usepackage[british]{babel}
\usepackage{geometry}
\usepackage{fullpage}

\newtheorem{theorem}[equation]{Theorem}

\newtheorem{proposition}[equation]{Proposition}

\newtheorem{definition}[equation]{Definition}
\newtheorem{example}[equation]{Example}

\newtheorem{remark}[equation]{Remark}

\numberwithin{equation}{section}

\newcommand{\rn}{\mathbb{R}}
\newcommand{\nn}{\mathbb{N}}

\newcommand{\cn}{\mathbb{C}}

\newcommand{\scj}{\subseteq}

\newcommand{\T}{\mathbb{T}}

\newcommand{\bx}{\mathbf{x}}

\newcommand{\CB}{\mathcal{B}}

\newcommand{\CG}{\mathcal{G}}

\newcommand{\CI}{\mathcal{I}}

\newcommand{\CO}{\mathcal{O}}
\newcommand{\CP}{\mathcal{P}}

\newcommand{\CR}{\mathcal{R}}
\newcommand{\CT}{\mathcal{T}}

\newcommand{\CCC}{\mathscr{C}}

\newcommand{\falg}{\CB_{\CG}}
\newcommand{\falgx}{\CB_{X}}
\newcommand{\gp}{{(\CG,\CR)}}
\newcommand{\triple}{{(\CG,\CR_N,\CR_S)}}

\newcommand{\Hom}{\operatorname{Hom}}

\newcommand{\newfd}[5]{\begin{array}{cccc}{#1}: &{#2} &\longrightarrow &{#3} \\& {#4} &\longmapsto &{#5}\end{array}}

\DeclareFontFamily{U}{matha}{\hyphenchar\font45}
\DeclareFontShape{U}{matha}{m}{n}{
	<5> <6> <7> <8> <9> <10> gen * matha
	<10.95> matha10 <12> <14.4> <17.28> <20.74> <24.88> matha12
}{}
\DeclareSymbolFont{matha}{U}{matha}{m}{n}
\DeclareFontSubstitution{U}{matha}{m}{n}

\DeclareFontFamily{U}{mathx}{\hyphenchar\font45}
\DeclareFontShape{U}{mathx}{m}{n}{
	<5> <6> <7> <8> <9> <10>
	<10.95> <12> <14.4> <17.28> <20.74> <24.88>
	mathx10
}{}
\DeclareSymbolFont{mathx}{U}{mathx}{m}{n}
\DeclareFontSubstitution{U}{mathx}{m}{n}

\DeclareMathDelimiter{\vvvert}{0}{matha}{"7E}{mathx}{"17}

\title{Notes on universal C*-algebras}
\author{Gilles G. de Castro}

\begin{document}
	
	\maketitle
	
	\begin{abstract}
		In these notes, we explain in details the construction of the universal C*-algebra as done by Blackadar, starting with the construction of the free *-algebra. At the end, we explain the extension done by Boava and the author to include relations described using the strong operator topology. As an example, we show that a graph C*-algebra can be defined using infinite sums.
	\end{abstract}
	
	\tableofcontents
	
	\section{Introduction}
	
	In abstract algebra, it is often useful to present an object in terms of generators and relations. A quintessential example is that of the complex numbers, that one can think of as being polynomials over the real numbers in the variable $i$ satisfying the relation $i^2=-1$. One way of achieving this is by constructing a free object in the corresponding category and then taking a suitable quotient to impose the needed relations. One can interpret the free object as that given by imposing no relations.
	
	When working with C*-algebras, the above framework does not work because, except for the empty set, there is no free C*-algebra generated by a set (Proposition \ref{prop:free.C*-algebra}). One approach to avoid this difficulty is to present operators in a Hilbert space satisfying the relations that we want to impose and then take the C*-algebra generated by these operators. The main issue is that the C*-algebra built this way depends on the representation and a question of interest is whether this C*-algebra is uniquely defined, that is, it does not depend on the representation. Examples of uniqueness theorems can be found, for instance, in the results of Coburn describing C*-algebras generated by an isometry \cite{Coburn}, the simplicity results of Cuntz algebras \cite{Cuntz} and the uniqueness results of Cuntz-Krieger algebras \cite{CuntzKrieger}. Another approach is to find a universal representation so that all other representations factors through the universal one.
	
	A general construction of universal C*-algebras was presented by Blackadar in \cite{Blackadar}. Although he said that ``we can allow any kind of relations which can be formulated for operators in a Hilbert space or for elements in a C*-algebra'' (as long as the relations are admissible), in his paper, he only developed the theory for a specific kind of relations, that we call norm relations here (see Section \ref{s:norm.relation}). Nevertheless, even after Blackadar's paper, there are results proving the existence of a universal representation when only using norm relations. For instance, the existence of a universal representation for Cuntz-Krieger algebras of finite matrices is proved in \cite{UniversalCuntzKrieger}, for row-finite graphs \cite{Graph1} and for arbitrary graphs \cite{Graph2} concretely whereas we could use directly Blackadar's general construction. Only in the latter, we see Blackadar's paper being mentioned.
	
	As mentioned above, the algebras considered by Cuntz and Krieger were given in terms of a C*-algebra generated by operators in a Hilbert space. In \cite[Remark 2.15]{CuntzKrieger}, they mention that we can consider a relation given by an infinite sum, where the convergence is to be interpreted with respect to the strong operator topology. However, when looking at definitions of universal C*-algebras using relations, infinite sums are avoided as if it were not possible to find a universal representation. Perhaps, one reason for this avoidance is that one can think that the relations must be valid for an abstract C*-algebra and not in a represented form. This is the case, for instance, in \cite{CuntzLi,ExelLaca,Graph2, Graph1}. In \cite{InfiniteSum}, Boava and the author developed the theory of universal C*-algebras in order to allow for infinite sums relations with respect to the strong operator topology (see Section \ref{s:SOT.relation}). In \cite{InfiniteSum}, the examples of the Cuntz algebra $\CO_\infty$ and of the Exel-Laca algebras were explored. In these notes, we give the example of graph C*-algebras (Example \ref{ex:graph}).
	
	In \cite{Loring}, Loring extends Blackadar's construction. He defines a C*-relation in a categorical way and, under certain conditions, he shows the existence of a universal C*-algebra. However, it is not clear if the SOT relations defined in \cite{InfiniteSum} fit in the definition of C*-relation according to Loring. Informally, the main issue is that Loring asks for a relation to be preserved under *-homomorphism of C*-algebras \cite[Definition 2.2(C3)]{Loring}, and SOT relations depend on the representation, so there is some work to be done in order to show whether these are preserved by *-homomorphisms or not.
	
	These notes presume that the reader is familiar with basic notions on the theory of C*-algebras such as in \cite{LivroMurphy}. The structure is as follows: in Section \ref{s:free}, we give the description of some free objects, with the main goal of describing the free *-algebra generated by a set. In Section \ref{s:universal.algebra}, we give the construction of algebras and *-algebras given by generators and relations. In Section \ref{s:universal.C*}, the main goal is to give the construction of the universal C*-algebra given by generators and relations both with respect to the norm and with respect to the strong operator topology. In order to prove the existence of the universal C*-algebras, we need some conditions on the relations and the construction itself is done abstractly in Section \ref{s:admissible}.
	
	We fix the notation $\nn$ for non-negative integers and $\nn^*$ for positive integers.
	
	\section{Free *-algebras}\label{s:free}
	
	The main goal of this section is to build a free $*$-algebra generated by a set $X$. For that, we work first with other algebraic structures that are used as building blocks for the free $*$-algebra.
	
	\subsection{Free semigroups}\label{s:free.semigroup}
	
	Fix $X$ a non-empty set. For each $n\in\nn^*$, let $X^n=X\times\cdots\times X$ be the Cartesian product of $n$ copies of $X$. Denote an element of $X^n$ via concatenation, that is, for $x_1,\ldots,x_n\in X$, we write $x_1\cdots x_n$ instead of the usual $n$-tuple notation $(x_1,\cdots,x_n)$. Let $X^+=\bigcup_{n\in\nn^*}X^n$ and define an operation on $X^+$ by concatenation, that is,
	\[(x_1\cdots x_n)\cdot(y_1\cdots y_m)=x_1\cdots x_ny_1\cdots y_m\]
	where $x_1\cdots x_n\in X^n$ and $y_1\cdots y_m\in X^m$. This operation is clearly associative, so that $X^+$ with concatenation is a semigroup. The following theorem describes the universal property of $X^+$.
	
	\begin{theorem}\label{thm:ex.free.semigroup}
		Let $f:X\to S$ be a function from $X$ to a semigroup $S$. Then there exists a unique semigroup homomorphism $\tilde{f}:X^+\to S$ such that $\tilde{f}(x)=f(x)$ for all $x\in X$.
	\end{theorem}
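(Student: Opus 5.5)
The plan is to define $\tilde{f}$ explicitly on each component $X^n$, then check that it is a homomorphism, and finally show that any homomorphism extending $f$ must agree with it.

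\textbf{Existence.} Since $X^+$ is the disjoint union of the sets $X^n$ for $n\in\nn^*$, it is enough to define $\tilde f$ on each $X^n$. I would set
\[\tilde f(x_1\cdots x_n)=f(x_1)f(x_2)\cdots f(x_n),\]
where the right-hand side is a product in $S$. Because $S$ is associative, this product needs no parenthesisation; to be fully rigorous, one can define it recursively by $\tilde f(x_1)=f(x_1)$ and $\tilde f(x_1\cdots x_{n+1})=\tilde f(x_1\cdots x_n)f(x_{n+1})$. By construction $\tilde f(x)=f(x)$ for every $x\in X=X^1$.

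\textbf{Homomorphism property.} Next I would show that
\[\tilde f(uv)=\tilde f(u)\tilde f(v)\quad\text{for } u\in X^n,\ v\in X^m,\]
by induction on $m$.
\begin{enumerate}[(i)]
\item For $m=1$, with $v=y_1$, this is exactly the recursive definition applied to $uy_1\in X^{n+1}$.
\item For the inductive step, write $v=v'y_{m+1}$ with $v'\in X^m$. Then
\[\tilde f(uv)=\tilde f(uv')f(y_{m+1})=\tilde f(u)\tilde f(v')f(y_{m+1})=\tilde f(u)\tilde f(v),\]
using the recursion, then the induction hypothesis, then associativity in $S$ together with the recursion once more.
\end{enumerate}
This step, the generalized associativity bookkeeping, is the only point requiring any care, and the induction on $m$ handles it.

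\textbf{Uniqueness.} Let $g:X^+\to S$ be any semigroup homomorphism with $g(x)=f(x)$ for all $x\in X$. Every element of $X^+$ factors as $x_1\cdots x_n=x_1\cdot x_2\cdots x_n$ in the semigroup $X^+$. Hence, by induction on $n$,
\[g(x_1\cdots x_n)=g(x_1\cdots x_{n-1})\,g(x_n)=\tilde f(x_1\cdots x_{n-1})\,f(x_n)=\tilde f(x_1\cdots x_n).\]
Therefore $g=\tilde f$.
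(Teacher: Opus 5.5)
Your proposal is correct and follows the paper's proof: the same formula $\tilde f(x_1\cdots x_n)=f(x_1)\cdots f(x_n)$ for existence, and the same uniqueness argument, expanding $g$ on a word letter by letter. The only difference is that you spell out, by induction on word length, the generalized-associativity step that the paper treats as clear.
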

	
	\begin{proof}
		Define $\tilde{f}:X^+\to S$ by $\tilde{f}(x_1\cdots x_n)=f(x_1)\cdots f(x_n)$, where $x_1\cdots x_n\in X^+$. By the definition of the product on $X^+$ and the associativity of the product in $S$, it is clear that $\tilde{f}:X^+\to S$ is a semigroup homomorphism such that $\tilde{f}(x)=f(x)$ for all $x\in X$.
		
		For the uniqueness, let $g: X^+\to S$ be a semigroup homomorphism such that $g(x)=f(x)$ for all $x\in X$. Then, for $x_1\cdots x_n\in X^+$, we have
		\[g(x_1\cdots x_n)=g(x_1)\cdots g(x_n)=f(x_1)\cdots f(x_n)=\tilde{f}(x_1\cdots x_n).\]
		Hence $g=\tilde{f}$.
	\end{proof}
	
	One can define a free semigroup generated by $X$ abstractly using the above property.
	
	\begin{definition}
		Let $X$ be a non-empty set. We say that $S_X$ is a \emph{free semigroup generated by $X$}, if there exists a function $i:X\to S_X$ such that for every function $f:X\to S$, where $S$ is a semigroup, there exists a unique semigroup homomorphism $\hat{f}:S_X\to S$ such that $f=\hat{f}\circ i$.
	\end{definition}
	
	We can interpret Theorem \ref{thm:ex.free.semigroup} as establishing existence of a free semigroup. In fact, any free semigroup is isomorphic to $X^+$. The proof is standard for algebraic structures satisfying a universal property. One way of having a generic proof is to interpret the construction in a categorical way. The uniqueness then is a consequence of the uniqueness, up to isomorphism, of an initial object in a category, in case it exists. Here, we prove it directly for semigroups. For the other constructions, the proofs are analogous and will be left as exercises to the reader.
	
	\begin{theorem}\label{thm:unique.free.semigroup}
		Let $X$ be a non-empty set and $S_X$ a free group generated by $X$. Then $X^+$ and $S_X$ are isomorphic semigroups.
	\end{theorem}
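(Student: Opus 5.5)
We prove the statement with the standard universal-property argument, comparing the two universal properties. Note that ``free group'' in the statement is meant as ``free semigroup'' in the sense of the preceding definition. Let $i:X\to S_X$ be the map given by that definition, and let $\iota:X\to X^+$ be the inclusion $x\mapsto x$, viewing $X=X^1\scj X^+$.

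First, we produce two homomorphisms. Since $S_X$ is a semigroup, Theorem \ref{thm:ex.free.semigroup} applied to $f=i:X\to S_X$ gives a semigroup homomorphism $\tilde{i}:X^+\to S_X$ with $\tilde{i}(x)=i(x)$ for all $x\in X$, that is, $\tilde{i}\circ\iota=i$. Conversely, $X^+$ is a semigroup, so the defining property of $S_X$ applied to $\iota:X\to X^+$ gives a homomorphism $\hat{\iota}:S_X\to X^+$ with $\hat{\iota}\circ i=\iota$.

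Next, we show both composites are identities, using the uniqueness parts. The composite $\hat{\iota}\circ\tilde{i}:X^+\to X^+$ is a semigroup homomorphism, and for $x\in X$,
\[\hat{\iota}(\tilde{i}(x))=\hat{\iota}(i(x))=\iota(x)=x.\]
The identity map of $X^+$ is also a homomorphism extending $\iota$. By the uniqueness in Theorem \ref{thm:ex.free.semigroup} applied to $f=\iota$, we get $\hat{\iota}\circ\tilde{i}=\mathrm{id}_{X^+}$. Similarly, $\tilde{i}\circ\hat{\iota}:S_X\to S_X$ is a homomorphism with
\[(\tilde{i}\circ\hat{\iota})\circ i=\tilde{i}\circ\iota=i.\]
The identity $\mathrm{id}_{S_X}$ satisfies the same equation. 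By the uniqueness clause in the definition of free semigroup, applied to $f=i:X\to S_X$, we get $\tilde{i}\circ\hat{\iota}=\mathrm{id}_{S_X}$. Hence $\tilde{i}$ is a bijective homomorphism with inverse $\hat{\iota}$, and so $X^+\cong S_X$.

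The only delicate point is this final step: seeing that the uniqueness must be applied to the map $i$ itself, with target $S_X$, and to $\iota$, with target $X^+$. In both cases the identity is a competing extension. No other obstacle is expected. As a byproduct, the argument shows that the isomorphism is compatible with the generating maps, since $\tilde{i}\circ\iota=i$.
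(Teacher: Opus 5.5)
Your proposal is correct and matches the paper's proof: you construct $\tilde{i}$ from Theorem \ref{thm:ex.free.semigroup} and the reverse map from the defining property of $S_X$, and then use the two uniqueness clauses, with the identity as the competing extension, to show both composites are identities. You also correctly read ``free group'' as ``free semigroup'', and you spell out the composite $\hat{\iota}\circ\tilde{i}=\mathrm{id}_{X^+}$, which the paper only covers with ``similarly''.
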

	
	\begin{proof}
		By Theorem, \ref{thm:ex.free.semigroup}, there exists a semigroup homomorphism $\tilde{i}:X^+\to S_X$ such that $\tilde{i}(x)=i(x)$ for all $x\in X$. On the other hand, if $h:X\to X^+$ is the inclusion function, then there exists a semigroup homomorphism $\hat{h}:S_X\to X^+$ such that $\hat{h}(i(x))=h(x)=x$ for all $x\in X$.
		
		Observe that the composition $\tilde{i}\circ\hat{h}:S_X\to S_X$ is such that $\tilde{i}(\hat{h}(i(x)))=\tilde{i}(x)=i(x)$ for all $x\in X$. Also, if $Id_{S_X}$ is the identity function on $S_X$, then $I_{S_X}\circ i=i$. The uniqueness part in the definition of a free semigroup then implies that $\tilde{i}\circ\hat{h}=\operatorname{Id}_{S_X}$. Similarly, we prove that $\hat{h}\circ\tilde{i}=\operatorname{Id}_{X^+}$. Hence, $\tilde{i}$ is an isomorphism between $X^+$ and $S_X$.
	\end{proof}
	
	\begin{remark}
		We can adapt the construction of the free semigroup to build a free monoid. In this case we let $X^0=\{\epsilon\}$, where $\epsilon$ is considered to be the empty word when concatenating. In this case, we let $X^*=\bigcup_{n\in\nn} X^n$ with operation given by concatenation. Similarly to what was done above, we prove that $X^*$ is the free monoid generated by $X$. Moreover, when working with monoids we can also consider the case where $X=\emptyset$.
	\end{remark}
	
	\subsection{Free modules}
	
	Before defining free algebras, let us work with free modules generated by a set. Fix $R$ be a ring with unit and $X$ a set.
	
	\begin{definition}
		We say that $M_X$ is a \emph{free $R$-module generated by $X$}, if there exists a function $i:X\to M_X$ such that for every function $\varphi:X\to M$, where $M$ is an $R$-module, there exists a unique $R$-module homomorphism $\hat{\varphi}:M_X\to M$ such that $\varphi=\hat{\varphi}\circ i$.
	\end{definition}
	
	If $X=\emptyset$, then the free module generated by $X$ is the zero module. So, from now on, let us suppose that $X$ is non-empty. Define
	\[R^{(X)}=\{f:X\to R:f(x)\neq 0\text{ for finitely many }x\in X\}.\]
	With pointwise operations $R^{(X)}$ is an $R$-module. For each $x\in X$, let $\delta_x:X\to R$ be given by $\delta_x(x)=1$ and $\delta_x(y)=0$ if $y\neq x$. Let $i:X\to R^{(X)}$ be given by $i(x)=\delta_x$.
	
	\begin{theorem}
		With the above construction, $R^{(X)}$ is a free $R$-module generated by $X$.
	\end{theorem}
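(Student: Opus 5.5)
The plan is to mirror the proof of Theorem \ref{thm:ex.free.semigroup}. First write every element as a finite combination of the $\delta_x$; then define the extension on that form; finally get uniqueness from linearity.

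\emph{Step 1: the $\delta_x$ form a basis.} Let $f\in R^{(X)}$ and let $F=\{x\in X: f(x)\neq 0\}$, a finite set. I will check pointwise that
\[f=\sum_{x\in F} f(x)\delta_x.\]
Both sides agree at each $y\in X$, since $\delta_x(y)=0$ for $x\neq y$. This representation is unique. Indeed, if $\sum_{x\in G} r_x\delta_x=0$ for a finite $G\subseteq X$, evaluating at $y\in G$ gives $r_y=0$. Hence every element of $R^{(X)}$ has a unique expression as a finitely supported combination of the $\delta_x$.

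\emph{Step 2: existence.} Given a function $\varphi:X\to M$ into an $R$-module $M$, define
\[\hat\varphi(f)=\sum_{x\in X} f(x)\varphi(x).\]
This is well defined because the sum is finite, being supported on $\{x: f(x)\neq0\}$. Additivity and $R$-linearity follow directly from the pointwise operations and the module axioms. To handle the supports, take the sum over the finite union of the supports; the extra terms are zero. Moreover $\hat\varphi(\delta_x)=\varphi(x)$, so $\hat\varphi\circ i=\varphi$.

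\emph{Step 3: uniqueness.} Suppose $\psi:R^{(X)}\to M$ is an $R$-module homomorphism with $\psi\circ i=\varphi$. By Step 1 and linearity,
\[\psi(f)=\sum_{x\in F} f(x)\psi(\delta_x)=\sum_{x\in F} f(x)\varphi(x)=\hat\varphi(f).\]
Hence $\psi=\hat\varphi$.

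There is no real obstacle here. The only point needing care is the well-definedness of $\hat\varphi$: the finite-support condition defining $R^{(X)}$ must make the sum finite, and linearity must be checked over a common finite index set. I will handle this by always summing over a finite set containing all the relevant supports.
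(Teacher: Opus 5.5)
Your proposal is correct and takes the same approach as the paper: define $\hat\varphi(f)=\sum_{x\in X}f(x)\varphi(x)$, which is well defined because $f$ has finite support. You also carry out the linearity, extension and uniqueness checks that the paper leaves to the reader, using the decomposition $f=\sum_{x\in F}f(x)\delta_x$.
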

	
	\begin{proof}
		Let $\varphi:X\to M$ be a function from $X$ to an $R$-module $M$. Define $\hat{\varphi}:R^{(X)}\to M$ by $\hat{\varphi}(f)=\sum_{x\in X}f(x)\varphi(x)$. Because $f(x)\neq 0$ for finitely many $x\in X$, the map $\hat{\varphi}(f)$ is well-defined. That $\hat{\varphi}$ is the unique homomorphism from $R^{(X)}$ to $M$ such that $\varphi=\hat{\varphi}\circ i$ is straightforward and left to the reader.
	\end{proof}
	
	\subsection{Free algebras}\label{s:free.algebra}
	
	Fix $R$ a commutative ring with unit and $X$ a set.
	
	\begin{definition}
		We say that $A_X$ is a \emph{free $R$-algebra generated by $X$} if there exists a function $i:X\to A_X$ such that for every function $\varphi:X\to A$, where $A$ is an $R$-algebra, there exists a unique $R$-algebra homomorphism $\hat{\varphi}:A_X\to A$ such that $\varphi=\hat{\varphi}\circ i$.
	\end{definition}
	
	As for modules, the zero algebra is the free $R$-algebra generated by the empty set. Suppose now that $X$ is non-empty and let $X^+$ be the free semigroup generated by $X$ as in Section \ref{s:free.semigroup}. As an $R$-module, define $R\left<X \right>:=R^{(X^+)}$. We define a multiplication on $R\left<X \right>$ by using the universal property of free modules and free semigroups.
	
	For each $x\in X$, define the function $L_x:X\to R\left<X \right>$ by $L_x(y)=xy$. By the universal property of $X^+$, there exists a unique semigroup homomorphism $\hat{L}_x:X^+\to R\left<X \right>$ extending $L_x$. Now by the universal property $R\left<X \right>$ as a free $R$-module, there exists a unique $R$-module homomorphism, $\tilde{L}_x:R\left<X \right>\to R\left<X \right>$ extending $\hat{L}_x$.
	
	Now we let $\psi:X\to\Hom_{R-mod}(R\left<X \right>,R\left<X \right>)$ be given by $\psi(x)=\tilde{L}_x$. Again, using the universal property of $X^+$ as a free semigroup, there exists a unique semigroup homomorphism $\hat{\psi}:X^+\to \Hom_{R-mod}(R\left<X \right>,R\left<X \right>)$ extending $\psi$. And by the universal property of $R\left<X \right>$ as a free $R$-module, there exists a unique $R$-module homomorphism $\tilde{\psi}:R\left<X \right>\to \Hom_{R-mod}(R\left<X \right>,R\left<X \right>)$ extending $\hat{\psi}$.
	
	Finally, we define the multiplication $m:R\left<X \right>\times R\left<X \right>\to R\left<X \right>$ by $m(p,q)=\tilde{\psi}(p)(q)$. By construction $m$ is bilinear. Using the bilinearity of $m$ and the associativity of concatenation, we prove that $m$ is associative.
	
	\begin{theorem}
		The $R$-algebra $R\left<X \right>$ is a free $R$-algebra.
	\end{theorem}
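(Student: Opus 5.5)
The plan is to take $i:X\to R\left<X\right>$ to be $i(x)=\delta_x$, where $x$ is regarded as a word of length one in $X^+$. I will then build the extension in two stages, following the pattern used to define the multiplication. First I record the key identity for the multiplication on basis elements: $m(\delta_u,\delta_v)=\delta_{uv}$ for all words $u,v\in X^+$. For a single letter $u=x$ this holds because $\tilde{\psi}(\delta_x)=\tilde{L}_x$ and $\tilde{L}_x(\delta_v)=\hat{L}_x(v)=\delta_{xv}$, identifying $v$ with $\delta_v$. For a general word $u=x_1\cdots x_n$ it follows since $\hat{\psi}$ is a semigroup homomorphism into $\Hom_{R-mod}(R\left<X\right>,R\left<X\right>)$ under composition, so that $\tilde{\psi}(\delta_u)=\tilde{L}_{x_1}\circ\cdots\circ\tilde{L}_{x_n}$. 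In particular, $R\left<X\right>$ is an associative $R$-algebra, as stated before the theorem.

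Given a function $\varphi:X\to A$ into an $R$-algebra $A$, I regard $A$ as a semigroup under multiplication. Theorem \ref{thm:ex.free.semigroup} then gives a unique semigroup homomorphism $\hat{\varphi}:X^+\to A$ extending $\varphi$. The universal property of $R^{(X^+)}$ as a free $R$-module gives a unique $R$-module homomorphism $\tilde{\varphi}:R\left<X\right>\to A$ with $\tilde{\varphi}(\delta_u)=\hat{\varphi}(u)$. Explicitly, $\tilde{\varphi}(p)=\sum_{u}p(u)\hat{\varphi}(u)$.

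It remains to show that $\tilde{\varphi}$ is multiplicative. Both $(p,q)\mapsto\tilde{\varphi}(m(p,q))$ and $(p,q)\mapsto\tilde{\varphi}(p)\tilde{\varphi}(q)$ are $R$-bilinear, the second because multiplication in an $R$-algebra is bilinear. It therefore suffices to check that they agree on pairs $(\delta_u,\delta_v)$. There
\[\tilde{\varphi}(\delta_u\delta_v)=\tilde{\varphi}(\delta_{uv})=\hat{\varphi}(uv)=\hat{\varphi}(u)\hat{\varphi}(v),\]
using the key identity and the fact that $\hat{\varphi}$ is a semigroup homomorphism. The identity $\varphi=\tilde{\varphi}\circ i$ holds by construction.

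For uniqueness, let $g:R\left<X\right>\to A$ be an $R$-algebra homomorphism with $g\circ i=\varphi$. The map $u\mapsto g(\delta_u)$ is a semigroup homomorphism $X^+\to A$ by the key identity, and it extends $\varphi$. By the uniqueness in Theorem \ref{thm:ex.free.semigroup}, $g(\delta_u)=\hat{\varphi}(u)$ for every word $u$. Then $g$ and $\tilde{\varphi}$ are $R$-module maps agreeing on the basis $\{\delta_u\}$, so the uniqueness in the free module property gives $g=\tilde{\varphi}$.

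The main obstacle is the key identity $m(\delta_u,\delta_v)=\delta_{uv}$, since it is buried in the nested universal-property definition of $m$. I will handle it by induction on the length of $u$, unwinding the definitions of $\hat{\psi}$ and $\tilde{L}_x$. Everything else reduces to bilinearity and the uniqueness clauses already available.
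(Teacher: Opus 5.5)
Your proof is correct and follows the paper's route: extend $\varphi$ to a semigroup homomorphism $X^+\to A$, then extend linearly to $R\left<X\right>$. You also supply the multiplicativity and uniqueness checks that the paper leaves to the reader, via $\delta_u\delta_v=\delta_{uv}$, bilinearity, and the uniqueness clauses for free semigroups and free modules. One caveat: your identity $\tilde{L}_x(\delta_v)=\delta_{xv}$ for words $v$ of every length requires reading $\hat{L}_x$ as the map $w\mapsto\delta_{xw}$ on all of $X^+$, which is the evident intent. Taken literally, the paper's description of $\hat{L}_x$ as a semigroup homomorphism extending $L_x$ would not give this identity.
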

	
	\begin{proof}
		The function $i:X \to R\left<X \right>$ is the same as the one for free modules, observing that $X\scj X^+$. Given a function $\varphi:X\to A$, where $A$ is an $R$-algebra, we use the two steps as in the definition of the multiplication on $R\left<X \right>$. More precisely, there exists a semigroup homomorphism $\hat{\varphi}:X^+\to A$ extending $\varphi$, where we consider $A$ as a semigroup with the multiplication. Now, using the universal property of $R\left<X \right>$ as a free $R$-module, there exists a $R$-module homomorphism $\tilde{\varphi}:R\left<X \right>\to A$ extending $\hat{\varphi}$. That $\tilde{\varphi}$ preserves multiplication and is the unique $R$-algebra homomorphism such that $\tilde{\varphi}\circ i=\varphi$ is left to the reader.
	\end{proof}
	
	\begin{remark}\label{rmk:free.unital.algebra}
		If we want to work with $R$-algebra with unit, we can use a similar construction as above with the free monoid $X^*$ instead of the free semigroup $X^+$. The notation $R\left<X \right>$ is usually used for the $R$-algebra with unit, and it is called the $R$-algebra of polynomials with non-commuting variables.
	\end{remark}
	
	\subsection{Free *-algebras}
	
	Let $R$ be a commutative $*$-ring with unit and $X$ a set. We denote the involution on $R$ by $\overline{r}$ for $r\in R$. In what follows, $*$-algebra will mean $*$-algebra over $R$.
	
	\begin{definition}
		We say that $A_X$ is a \emph{free $*$-algebra generated by $X$}, if there exists a function $i:X\to A_X$ such that for every function $\varphi:X\to A$, where $A$ is a $*$-algebra, there exists a unique $*$-homomorphism $\hat{\varphi}:A_X\to A$ such that $\varphi=\hat{\varphi}\circ i$.
	\end{definition}
	
	As before, the zero algebra is the free $*$-algebra generated by the empty set. For the construction of the free $*$-algebra, we let $X^{\dagger}=\{x^*:x\in X\}$ be a disjoint copy of $X$. Consider $\falgx:=R\left<X\cup X^\dagger\right>$ the free algebra generated by $X\cup X^\dagger$. For simplicity of notation, we consider that $X\cup X^\dagger\scj \falgx$.
	
	Recall that to define a $*$-algebra, we need an involution that is a conjugate-linear anti-homomorphism. In order to define an involution on $\falgx$, we consider the $R$-algebra $\falgx^h$, such that $\falgx^h=\falgx$ as abelian groups with the addition. The other operations are given by $r\cdot_h p=\overline{r}p$ and $p\cdot_h q=qp$ for $r\in R$ and $p,q\in\falgx$. Consider the function $\varphi:X\cup X^\dagger$ given by $\varphi(x)=x^*$ and $\varphi(x^*)=x$. Using the universal property of $\falgx$ as an $R$-algebra, there is a unique $R$-algebra homomorphism $*:\falgx\to\falgx^h$. Going back to the original $R$-algebra structure of $\falgx$ on the codomain, we see that $*:\falgx\to \falgx$ is a conjugate-linear anti-homomorphism involution.
	
	\begin{theorem}
		The $*$-algebra $\falgx$ is a free $*$-algebra generated by $X$.
	\end{theorem}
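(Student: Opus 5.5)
The plan is to reduce everything to the universal property of $\falgx=R\left<X\cup X^\dagger\right>$ as a free $R$-algebra, established in Section \ref{s:free.algebra}. The map $i:X\to\falgx$ is the inclusion $x\mapsto x$.

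Given a $*$-algebra $A$ and a function $\varphi:X\to A$, extend it to $\varphi':X\cup X^\dagger\to A$ by setting $\varphi'(x)=\varphi(x)$ and $\varphi'(x^*)=\varphi(x)^*$. The universal property of the free $R$-algebra then gives a unique $R$-algebra homomorphism $\hat{\varphi}:\falgx\to A$ extending $\varphi'$. In particular $\hat{\varphi}\circ i=\varphi$.

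The main step is to check that $\hat{\varphi}$ preserves the involution, that is, $\hat{\varphi}(p^*)=\hat{\varphi}(p)^*$ for all $p\in\falgx$. I would use the same trick the paper uses to build $*$, namely pass to the opposite-conjugate algebra $A^h$. The two maps $p\mapsto\hat{\varphi}(p^*)$ and $p\mapsto\hat{\varphi}(p)^*$ are both $R$-algebra homomorphisms $\falgx\to A^h$:
\begin{itemize}
\item the first is the composition of $*:\falgx\to\falgx^h$ with $\hat{\varphi}$, the latter viewed as a homomorphism $\falgx^h\to A^h$;
\item the second is the composition of $\hat{\varphi}$ with the involution of $A$, which is an $R$-algebra homomorphism $A\to A^h$ because it is conjugate-linear and reverses products.
\end{itemize}
On generators they agree: for $x\in X$, both give $\varphi(x)^*$, and for $x^*\in X^\dagger$, both give $\varphi(x)$, using $(a^*)^*=a$ in $A$. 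By the uniqueness part of the universal property of $\falgx$ as a free $R$-algebra, the two maps coincide. This bookkeeping, verifying that each composite really is an $R$-algebra homomorphism into $A^h$ with its twisted scalar action, is the only delicate point, and it is routine.

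For uniqueness, let $\psi:\falgx\to A$ be a $*$-homomorphism with $\psi\circ i=\varphi$. Then $\psi(x^*)=\psi(x)^*=\varphi(x)^*$, so $\psi$ is an $R$-algebra homomorphism agreeing with $\varphi'$ on $X\cup X^\dagger$. The uniqueness in the free algebra universal property gives $\psi=\hat{\varphi}$.

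Finally, the case $X=\emptyset$ is the zero algebra, which the paper has already noted is the free $*$-algebra generated by the empty set.
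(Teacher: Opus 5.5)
Your proposal is correct and follows the same route as the paper: extend $\varphi$ to $X\cup X^\dagger$ by $x^*\mapsto\varphi(x)^*$ and apply the universal property of $\falgx$ as a free $R$-algebra. The paper leaves the $*$-preservation and uniqueness checks as ``straightforward''; you carry them out explicitly, using the $A^h$ trick together with uniqueness on generators for the first, and the universal property again for the second.
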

	
	\begin{proof}
		Let $\varphi:X\to A$ be a function where $A$ is a $*$-algebra. Define a new function $\tilde{\varphi}:X\cup X^\dagger\to A$ by $\tilde{\varphi}(x)=\varphi(x)$ and $\tilde{\varphi}(x^*)=\varphi(x)^*$ for every $x\in X$. The universal property of $\falgx$ as a free $R$-algebra guarantees the existence of a $R$-algebra homomorphism $\hat{\varphi}$ extending $\tilde{\varphi}$. It is straight forward to check that $\hat{\varphi}$ is a $*$-homomorphism and the unique one that extends $\varphi$.
	\end{proof}
	
	\subsection{Free C*-algebras}
	
	Let $X$ be a set. We can mimic the definition of free object for C*-algebras.
	
	\begin{definition}
		We say that $C_X$ is a \emph{free C*-algebra generated by $X$}, if there exists a function $i:X\to C_X$ such that for every function $\varphi:X\to B$, where $B$ is a C*-algebra, there exists a unique $*$-homomorphism $\hat{\varphi}:C_X\to B$ such that $\varphi=\hat{\varphi}\circ i$.
	\end{definition}
	
	If $X$ is the empty set, then the zero algebra is the free C*-algebra generated by $X$. In all other case, we have the following.
	
	\begin{proposition}\label{prop:free.C*-algebra}
		There is no free C*-algebra generated by a non-empty set.
	\end{proposition}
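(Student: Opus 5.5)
I will argue by contradiction. Suppose $X\neq\emptyset$ and $C_X$ is a free C*-algebra generated by $X$, with structure map $i:X\to C_X$. Fix some $x_0\in X$ and set $a=i(x_0)\in C_X$, which has a finite norm $\|a\|$. The strategy is to map $x_0$ to an element of norm strictly bigger than $\|a\|$ in some C*-algebra. This clashes with the standard fact that $*$-homomorphisms between C*-algebras are contractive.

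First I choose the target. Let $B=\cn$, which is a C*-algebra, and define $\varphi:X\to\cn$ by $\varphi(x)=\|a\|+1$ for every $x\in X$; any constant of modulus larger than $\|a\|$ would do. By the defining property of a free C*-algebra there is a $*$-homomorphism $\hat\varphi:C_X\to\cn$ with $\hat\varphi\circ i=\varphi$. In particular $\hat\varphi(a)=\|a\|+1$.

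Next I use the automatic continuity of $*$-homomorphisms between C*-algebras (as in \cite{LivroMurphy}): $\|\hat\varphi(b)\|\le\|b\|$ for all $b\in C_X$. Briefly, this holds because a $*$-homomorphism cannot enlarge spectra, $\sigma(\hat\varphi(b))\subseteq\sigma(b)\cup\{0\}$, so for self-adjoint $b$ the norm equals the spectral radius and does not increase. The general case then follows from $\|b\|^2=\|b^*b\|$. Applying this bound to $b=a$ gives $\|a\|+1=|\hat\varphi(a)|\le\|a\|$, which is a contradiction. Hence no free C*-algebra on a non-empty $X$ exists.

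The only step with any real content is the contractivity of $*$-homomorphisms. I will simply cite it as a standard result rather than reprove it. The rest of the argument just amounts to choosing $\varphi$ with values large enough. Note that uniqueness of $\hat\varphi$ is not needed; existence alone already fails.
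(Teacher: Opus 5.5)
Your proof is correct and is essentially the paper's argument: send every generator to a large constant in $\cn$ and contradict the contractivity of $*$-homomorphisms between C*-algebras. The only difference is that you choose the single constant $\|i(x_0)\|+1$, whereas the paper uses the whole sequence of constants $n\in\nn$.
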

	
	\begin{proof}
		Let $X$ be a non-empty set and suppose that $C_X$ is freely generated by $X$. Recall that every *-homomorphism between C*-algebras is contractive. For each $n\in\nn$, we consider the constant function $f_n:X\to \cn$ given by $f_n(x)=n$ for all $x\in X$. Then there exists a *-homomorphism $g_n:C_X\to\cn$ such that $g(x)=n$ and, hence $n\leq\|x\|$ for all $x\in X$ and $n\in\nn$. This is a contradiction with the fact that $\|x\|\in [0,\infty)$.
	\end{proof}
	
	\section{Universal algebras and *-algebras defined by generators and relations}\label{s:universal.algebra}
	
	It is often very useful to define an algebra generated by a certain set $X$ and then impose that two given expressions represent the same element. For instance, a way of defining the complex numbers is to consider the free $\rn$-algebra with unit generated by an element $i$ such that $i^2=-1$. In this section, we make the meaning of a relation more precise and show how to find an algebra satisfying a universal property. We later consider the case of $*$-algebras.
	
	\subsection{Algebras defined by generators and relations}
	
	Fix $X$ be a non-empty set and $R$ a commutative ring with unit. Consider also $R\left<X\right>$, the free algebra generated by $X$ defined in Section \ref{s:free.algebra}.
	
	\begin{definition}
		A \emph{relation} on the algebra $R\left<X\right>$, is a pair $(a,b)\in R\left<X\right>\times R\left<X\right>$. We say that a function $\varphi:X\to B$, where $B$ is an $R$-algebra, \emph{satisfies the relation} $(a,b)$ if $\hat{\varphi}(a)=\hat{\varphi}(b)$, where $\hat{\varphi}:R\left<X\right>\to B$ is the unique homomorphism given by the universal property of $R\left<X\right>$.
	\end{definition}
	
	A relation $(a,b)$ is to be interpreted as an equality $a=b$ to be satisfied by the variable in the set $X$, for instance, $i^2=-1$ for the complex numbers. Usually, we present a relation as an equality to be satisfied instead of a pair.
	
	\begin{definition}\label{def:univ.algebra}
		Given a set $\CR$ of relations on $R\left<X\right>$, a \emph{universal $R$-algebra generated by $X$ satisfying the relations in $\CR$} is an $R$-algebra $A$ with a function $\iota:X\to A$ that satisfies the relations in $\CR$ such that for every function $\varphi:X\to B$ satisfying the relations in $\CR$, there exists a unique homomorphism $\tilde{\varphi}:A\to B$ such that $\tilde{\varphi}\circ\iota=\varphi$.
	\end{definition}
	
	\begin{remark}
		It is also common to use the word free instead of universal in Definition \ref{def:univ.algebra}. In these notes, we use the word free only in the context presented in Section \ref{s:free}.
	\end{remark}
	
	\begin{theorem}\label{thm:ex.universa.algebra}
		For a set of relation $\CR$ on $R\left<X\right>$, there is a unique (up to isomorphism) $R$-algebra generated by $X$ satisfying the relations in $\CR$.
	\end{theorem}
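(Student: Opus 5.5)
For existence, I would take the quotient of the free algebra by the two-sided ideal generated by the relations. Let $I$ be the two-sided ideal of $R\left<X\right>$ generated by the set $\{a-b:(a,b)\in\CR\}$, i.e. the smallest two-sided ideal containing these differences, which is also an $R$-submodule. Set $A=R\left<X\right>/I$, let $q:R\left<X\right>\to A$ be the quotient homomorphism, and define $\iota=q\circ i$, where $i:X\to R\left<X\right>$ is the canonical map from Section \ref{s:free.algebra}. Since $q$ is an $R$-algebra homomorphism extending $\iota$, uniqueness in the universal property of $R\left<X\right>$ gives $\hat{\iota}=q$. Then $\hat{\iota}(a)-\hat{\iota}(b)=q(a-b)=0$ for every $(a,b)\in\CR$, so $\iota$ satisfies the relations.

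Next I would verify the universal property. Let $\varphi:X\to B$ satisfy the relations in $\CR$, and let $\hat{\varphi}:R\left<X\right>\to B$ be the homomorphism from the free algebra. By hypothesis $a-b\in\ker\hat{\varphi}$ for all $(a,b)\in\CR$. Since $\ker\hat{\varphi}$ is a two-sided ideal, it contains $I$. Therefore $\hat{\varphi}$ factors through the quotient as a homomorphism $\tilde{\varphi}:A\to B$ with $\tilde{\varphi}\circ q=\hat{\varphi}$, and hence $\tilde{\varphi}\circ\iota=\varphi$.

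For uniqueness of $\tilde{\varphi}$, suppose $\psi:A\to B$ is a homomorphism with $\psi\circ\iota=\varphi$. Then $\psi\circ q$ is a homomorphism $R\left<X\right>\to B$ extending $\varphi$, so $\psi\circ q=\hat{\varphi}$ by the free property. Since $q$ is surjective, $\psi=\tilde{\varphi}$.

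Uniqueness of $A$ up to isomorphism follows the argument of Theorem \ref{thm:unique.free.semigroup}. Suppose $(A,\iota)$ and $(A',\iota')$ are both universal. Each of $\iota$ and $\iota'$ satisfies the relations, so the universal properties give homomorphisms $\alpha:A\to A'$ and $\beta:A'\to A$ with $\alpha\circ\iota=\iota'$ and $\beta\circ\iota'=\iota$. Then $\beta\circ\alpha$ and $\mathrm{Id}_A$ are both homomorphisms $A\to A$ compatible with $\iota$. Here $\iota$ itself satisfies the relations, so the uniqueness clause applied with $B=A$ gives $\beta\circ\alpha=\mathrm{Id}_A$. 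Symmetrically $\alpha\circ\beta=\mathrm{Id}_{A'}$, so $\alpha$ is an isomorphism.

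There is no real obstacle here. The only points needing care are the identification $\hat{\iota}=q$, which again comes from uniqueness in the free property, and the observation that the kernel of any homomorphism is a two-sided ideal, so it contains the generated ideal $I$.
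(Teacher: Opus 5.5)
Your proposal is correct and follows the paper's proof. Both take the quotient by the two-sided ideal generated by $\{a-b:(a,b)\in\CR\}$, identify $\hat{\iota}$ with the quotient map, factor $\hat{\varphi}$ through the quotient because its kernel contains $I$, and get uniqueness up to isomorphism as in Theorem~\ref{thm:unique.free.semigroup}. You also prove uniqueness of $\tilde{\varphi}$ from the surjectivity of $q$, a step the paper leaves implicit.
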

	
	\begin{proof}
		For the existence, we consider $I$ to be the two-sided ideal of $R\left<X\right>$ generated by the set $\{a-b:(a,b)\in\CR\}$. Let $A=R\left<X\right>/I$ and define $\iota:X\to A$ by $\iota(x)=x+I$ (considering, by simplicity, that $X\scj R\left<X\right>$). The unique extension of $\iota$ to a homomorphism of $R$-algebras $\hat{\iota}:R\left<X\right>\to A$ is the quotient map, so that $\iota$ satisfies the relations in $\CR$.
		
		Consider a function $\varphi:X\to B$ satisfying the relation in $\CR$, where $B$ is an $R$-algebra, and let $\hat{\varphi}:R\left<X\right>\to B$ be the unique extension of $\varphi$ to $R\left<X\right>$. Note that $I\scj \ker(\hat{\varphi})$, so that it factors through an $R$-algebra homomorphism $\tilde{\varphi}:A\to B$ in the quotient. Clearly $\tilde{\varphi}\circ \iota=\varphi$.
		
		The proof of the uniqueness of $A$ follows the same lines as the proof of Theorem \ref{thm:unique.free.semigroup}.
	\end{proof}
	
	\begin{example}[Adding a unit]\label{ex:add.unit}
		Let $X$ be a set, and consider $u\notin X$. Let $A$ be the universal $R$-algebra generated by $X\cup\{u\}$ with relations given by
		\[u^2=u,\quad ux=x,\quad xu=u\quad\text{ for all }x\in X.\]
		Then $A$ is isomorphic to the unital $R$-algebra generated by $X$ as described in Remark \ref{rmk:free.unital.algebra}.
		
		The main difference is that for $A$ we do not require that $u$ is sent to the unit of another algebra, whereas if we are working in the category of unital algebras, the homomorphisms preserve the unit.
	\end{example}
	
	In what follows, when we say that universal algebra with unit, we mean that we are using the given generators and relations together with an extra element $u$ and the corresponding relations as in Example \ref{ex:add.unit}.
	
	\begin{example}[Complex numbers]
		The complex numbers is the universal $\rn$-algebra with unit generated by element $i$ satisfying the relation $i^2=-1$.
	\end{example}
	
	\begin{example}[Polynomial algebra with commuting variables]
		Let $X$ be a non-empty set. The polynomial algebra $R[X]$ is the universal $R$-algebra with unit satisfying the relations $xy=yx$ for all $x,y\in X$.
	\end{example}
	
	\begin{example}[Leavitt path algebras] \label{ex:Leavitt}
		A graph is a quadruple $E=(E^0,E^1,r,s)$, where $E^0$ is a set whose elements are called vertices, $E^1$ is a set whose elements are called edges, and $r,s:E^1\to E^0$ are functions called the range and source maps, respectively. We assume that $E^0$ and $E^1$ are disjoint. A vertex $v$ is said to be regular if $s^{-1}(v)$ is finite and non-empty. Also, let $(E^1)^*=\{e^*:e\in E^1\}$ be a copy of $E^1$ disjoint from $E^0\cup E^1$. The Leavitt path algebra $L_R(E)$ is the universal $R$-algebra generated by $E^0\cup E^1\cup (E^1)^*$ with relations:
		\begin{enumerate}
			\item $vv'=\delta_{v,v'}v$ for all $v,v'\in E^0$;
			\item $s(e)e=e=er(e)$ for all $e\in E^1$;
			\item $r(e)e^*=e^*=e^*s(e)$ for all $e\in E^1$;
			\item $f^*e=\delta_{e,f}r(e)$ for all $e\in E^1$;
			\item $v=\sum_{e\in s^{-1}(v)}ee^*$ for every regular vertex $v\in E^0$,
		\end{enumerate}
		where $\delta$ represents the Kronecker delta.
	\end{example}
	
	Note that in Example \ref{ex:Leavitt}, we considered a copy of $E^1$ similarly to what we have done in the construction of the free $*$-algebra. In fact, we can define a $*$-algebra generated by a set and relations.
	
	\subsection{*-algebras defined by generators and relations}
	
	From now on, suppose that $R$ is a $*$-ring. The definitions and results are analogous to those of the previous section.
	
	\begin{definition}
		A \emph{relation} on the algebra $\falgx$, is a pair $(a,b)\in \falgx\times \falgx$. We say that a function $\varphi:X\to B$, where $B$ is an $*$-algebra, \emph{satisfies the relation} $(a,b)$ if $\hat{\varphi}(a)=\hat{\varphi}(b)$, where $\hat{\varphi}:\falgx\to B$ is the unique $*$-homomorphism given by the universal property of $\falgx$.
	\end{definition}
	
	\begin{definition}\label{def:univ.*-algebra}
		Given a set $\CR$ of relations on $\falgx$, a \emph{universal $R$-algebra generated by $X$ satisfying the relations in $\CR$} is a $*$-algebra $A$ with a function $\iota:X\to A$  that satisfies the relations in $\CR$ such that for every function $\varphi:X\to B$ satisfying the relations in $\CR$, there exists a unique $*$-homomorphism $\tilde{\varphi}:A\to B$ such that $\tilde{\varphi}\circ\iota=\varphi$.
	\end{definition}
	
	\begin{theorem}
		For a set of relations $\CR$ on $\falgx$, there is a unique (up to isomorphism) $*$-algebra generated by $X$ satisfying the relations in $\CR$.
	\end{theorem}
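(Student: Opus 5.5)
The plan mirrors the proof of Theorem \ref{thm:ex.universa.algebra}, with the two-sided ideal replaced by a self-adjoint two-sided ideal.

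For existence, let $I$ be the two-sided $*$-ideal of $\falgx$ generated by $\{a-b:(a,b)\in\CR\}$. Concretely, take the two-sided ideal generated by the set
\[S=\{a-b:(a,b)\in\CR\}\cup\{(a-b)^*:(a,b)\in\CR\}.\]
First I check that $I$ is closed under the involution. An element of $I$ is a finite sum of terms $r\,p\,s\,q$, with $r\in R$, $s\in S$ and $p,q$ products in $\falgx$ or empty. Its adjoint is $\overline{r}\,q^*s^*p^*$. Since $S^*=S$, this adjoint again lies in $I$. Consequently $A=\falgx/I$ is a $*$-algebra with involution $(p+I)^*=p^*+I$, and the quotient map $\pi:\falgx\to A$ is a $*$-homomorphism. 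Define $\iota(x)=x+I$. The map $\pi$ is a $*$-homomorphism extending $\iota$, so by the uniqueness part of the universal property of $\falgx$ (the free $*$-algebra theorem above) it is $\hat\iota$. Hence $\hat\iota(a)-\hat\iota(b)=a-b+I=0$, and $\iota$ satisfies $\CR$.

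Next, let $\varphi:X\to B$ satisfy $\CR$, with $B$ a $*$-algebra, and let $\hat\varphi:\falgx\to B$ be its $*$-homomorphic extension. Then $a-b\in\ker\hat\varphi$ for every relation. Since $\hat\varphi$ is a $*$-homomorphism, also $(a-b)^*\in\ker\hat\varphi$. As $\ker\hat\varphi$ is a two-sided ideal, $I\scj\ker\hat\varphi$. Therefore $\hat\varphi$ factors as $\tilde\varphi\circ\pi$ for a well-defined $R$-algebra homomorphism $\tilde\varphi:A\to B$. This $\tilde\varphi$ preserves $*$ because $\hat\varphi$ and $\pi$ do and $\pi$ is surjective. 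Moreover $\tilde\varphi\circ\iota=\varphi$.

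For uniqueness of $\tilde\varphi$, note that $A$ is generated as a $*$-algebra by $\iota(X)$, since $\pi$ is surjective. Any two $*$-homomorphisms $A\to B$ agreeing on $\iota(X)$ therefore agree everywhere. Equivalently, composing each with $\pi$ gives two $*$-extensions of $\varphi$ on $\falgx$, and these coincide by freeness.

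Uniqueness of $A$ up to isomorphism follows exactly as in Theorem \ref{thm:unique.free.semigroup}. Given another universal pair $(A',\iota')$, the universal properties give $*$-homomorphisms $A\to A'$ and $A'\to A$ that intertwine $\iota$ and $\iota'$. Each composite fixes the generators, so by the uniqueness clause it is the identity.

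The only point needing real care is that $I$ is self-adjoint, so that the quotient inherits an involution; this is why the set $S$ includes the adjoints $(a-b)^*$. Everything else is routine.
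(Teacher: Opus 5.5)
Your proposal is correct and follows the paper's approach. The paper's proof says only to repeat the argument of Theorem \ref{thm:ex.universa.algebra} with $I$ replaced by the self-adjoint ideal generated by $\{a-b:(a,b)\in\CR\}$; your ideal generated by $S$ (the differences together with their adjoints) is exactly that ideal, and you fill in the details the paper leaves implicit: $I^*\scj I$, $\pi=\hat\iota$, and uniqueness of $\tilde\varphi$.
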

	
	\begin{proof}
		The proof is analogous to that of Theorem \ref{thm:ex.universa.algebra}, replacing $I$ with the self-adjoint ideal generated by $\{a-b:(a,b)\in\CR\}$.
	\end{proof}
	
	\begin{example}[Leavitt path algebras]
		Let $E$ be a graph. We can define $L_R(E)$ as the $*$-algebra generated by $E^0\cup E^1$ satisfying the relations
		\begin{enumerate}
			\item $v^*=v$ for all $v\in E^0$;
			\item $vv'=\delta_{v,v'}v$ for all $v,v'\in E^0$;
			\item $s(e)e=e=er(e)$ for all $e\in E^1$;
			\item $f^*e=\delta_{e,f}r(e)$ for all $e\in E^1$;
			\item $v=\sum_{e\in s^{-1}(v)}ee^*$ for every regular vertex $v\in E^0$.
		\end{enumerate}
		If we start with a ring $R$ with no involution a priori, we can always consider the identity function as the involution for $R$. In this case, this definition coincides with the one given in Example \ref{ex:Leavitt}.
	\end{example}
	
	\section{Universal C*-algebras}\label{s:universal.C*}
	
	Even though we do not have free C*-algebras for non-empty sets, we can still build C*-algebras using generators and relations, as long as the relations guarantee that there is a certain bound in the norm of elements in the algebra being generated. The idea is to use the free $*$-algebra over $\cn$ as the starting point of our construction.
	
	From now on, all algebras will be over $\cn$.
	
	\subsection{Admissible class of representations}\label{s:admissible}
	
	Fix $\CG$ a non-empty set and $\falg$ the free $*$-algebra generated by $\CG$. A representation of $\falg$ on a Hilbert space is a $*$-homomorphism $\rho:\falg\to B(H)$, where $B(H)$ is the C*-algebra of bounded operators on $H$. Due to the universal property of $\CG$, it suffices to present a family of bounded operators $\{T_g:H\to H\}_{g\in\CG}$ in order to present $\rho$.
	
	\begin{definition}\label{admissible}
		A class $\CCC$ of representations of $\falg$ on Hilbert spaces (not necessarily the same) is said to be \emph{admissible} if for every family $\{\rho_\alpha:\falg\to B(H_\alpha)\}_\alpha$ of representations of $(\CG,\CR)$ in $\CCC$, the map
		\[\newfd{\bigoplus_\alpha\rho_{\alpha}}{\falg}{B\left(\bigoplus_\alpha H_\alpha\right)}{x}{\bigoplus\rho_\alpha(x)}\]
		is a well-defined representation of $\falg$ in the class $\CCC$.
	\end{definition}
	
	Given a non-empty class $\CCC$ of admissible representations of $\falg$, let $\CP_\CCC$ be the set of all seminorms $p:\falg\to\rn_+$ such that there exists a representation $\rho\in\CCC$ such that $p(a)=\|\rho(a)\|$ for all $a\in\falg$.
	
	\begin{remark}
		We use class in Definition~\ref{admissible}, because the collection of representations considered might be too large to be a set. In Proposition~\ref{C*-seminorm.well.defined}, to avoid taking the supremum over a class, we consider the subset $\CP_\CCC$ of the set of all functions from $\falg$ to $\rn_+$ and take the supremum over $\CP_\CCC$.
	\end{remark}
	
	\begin{proposition}\label{C*-seminorm.well.defined}
		In the context above, $\sup_{p\in\CP_\CCC}p(a)<\infty$ for all $a\in\falg$.
	\end{proposition}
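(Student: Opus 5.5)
We argue by contradiction, using admissibility to bring all the seminorms in $\CP_\CCC$ together in a single representation. Suppose there is $a\in\falg$ with $\sup_{p\in\CP_\CCC}p(a)=\infty$. Since $\CCC$ is non-empty, $\CP_\CCC$ is non-empty. For each $n\in\nn^*$ choose $p_n\in\CP_\CCC$ with $p_n(a)\geq n$, and then a representation $\rho_n:\falg\to B(H_n)$ in $\CCC$ with $p_n(b)=\|\rho_n(b)\|$ for all $b\in\falg$. Only countably many choices are made, indexed by elements of the set $\CP_\CCC$, so we never choose from a proper class.

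Next we apply Definition~\ref{admissible} to the family $\{\rho_n\}_{n\in\nn^*}$. It gives that $\rho=\bigoplus_n\rho_n:\falg\to B\left(\bigoplus_n H_n\right)$ is a well-defined representation in $\CCC$. In particular, $\rho(a)=\bigoplus_n\rho_n(a)$ is a \emph{bounded} operator on $\bigoplus_n H_n$.

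Finally we compare norms. For each $n$, restricting $\rho(a)$ to the summand $H_n$ gives $\rho_n(a)$. Taking unit vectors $\xi\in H_n\subseteq\bigoplus_m H_m$ shows
\[
\|\rho(a)\|\geq\|\rho_n(a)\|=p_n(a)\geq n\quad\text{for all }n\in\nn^*.
\]
This contradicts $\|\rho(a)\|<\infty$. Hence $\sup_{p\in\CP_\CCC}p(a)<\infty$.

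The only delicate point is that the argument depends entirely on the admissibility hypothesis. A direct sum of representations need not consist of bounded operators in general, and admissibility is exactly the assertion that the direct sum is a well-defined element of $B\left(\bigoplus_n H_n\right)$, which is what gives the bound. Alternatively, one could take the direct sum over all of $\CP_\CCC$, choosing one representation for each $p$, and read off $\sup_p p(a)\leq\|\rho(a)\|$ directly.
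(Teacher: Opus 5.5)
Your proof is correct and follows the same route as the paper: choose representations $\rho_n\in\CCC$ with $\|\rho_n(a)\|\geq n$, then note that admissibility would force $\bigoplus_n\rho_n$ to be a well-defined representation, which is impossible. You also make explicit the step the paper leaves implicit, namely that $\|\bigoplus_m\rho_m(a)\|\geq\|\rho_n(a)\|$ for every $n$, so $\bigoplus_n\rho_n(a)$ cannot be a bounded operator.
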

	
	\begin{proof}
		Suppose that $\sup_{p\in\CP_\CCC}p(a)=\infty$ for some $a\in\falg$. Then, for each $n\in\nn$, there exists a representation $\rho_n:\falg\to B(H_n)$ such that $p(a)=\|\rho_n(a)\|\geq n$. In this case, $\bigoplus_n \rho_n$ is not a well-defined representation, which is a contradiction.
	\end{proof}
	
	Using the above proposition, we can define
	\begin{equation}\label{eq:C*-seminorm}\vvvert a\vvvert:=\sup_{p\in\CP_\CCC}\{p(a)\}\end{equation}
	for each $a\in\falg$. It is straightforward to check that $\vvvert\cdot\vvvert$ is a C*-seminorm in $\falg$, that is a seminorm such that $\vvvert a\vvvert^2=\vvvert a^*a\vvvert$ for all $a\in\falg$.
	
	We use the standard procedure to define a C*-algebra from a *-algebra with a C*-seminorm. Let $\CI:=\{a\in\falg:\vvvert a\vvvert=0\}$, which is an ideal of $\falg$. On the quotient $\falg/\CI$, we have the C*-norm given by
	\[\|a+\CI\|=\inf\{\vvvert a+b\vvvert:b\in\CI\}.\]
	We then define a C*-algebra $A(\CG,\CCC)$ as the completion of $\falg/\CI$. Then, $A(\CG,\CCC)$ is a C*-algebra with the natural operations. There is a map $\iota:\CG\to A(\CG,\CCC)$ by mapping $g\in\CG$ to its class in $\falg/I_\gp$.
	
	\subsection{Norm relations}\label{s:norm.relation}
	
	In this section, we explain the construction developed by Blackadar in \cite{Blackadar}.
	
	\begin{definition}
		We define a \emph{norm relation} as a pair $(x,\eta)\in\falg\times\rn_+$. A \emph{generating pair} is a pair $(\CG,\CR)$, where $\CR$ is a set of norm relations on $\CG$. We say that a $*$-homomorphism $\rho:\falg\to B(H)$, where $H$ is a Hilbert space, satisfies the relation $(x,\eta)$ if $\|\rho(x)\|\leq\eta$.
	\end{definition}
	
	A norm relation is to be interpreted as $\|x\|\leq \eta$. For an equality, we use $\eta=0$. And if we want to impose a relation of the form $a=b$ for $a,b\in\falg$, we consider the pair $(a-b,0)\in\falg\times\rn_+$.
	
	\begin{example}
		If we want to say that element $p\in\CG$ is a projection, we impose the relations $(p-p^*,0)$ and $(p-p^2,0)$. If we want to say that an element $s\in \CG$ is a partial isometry, we consider the relation $(ss^*s-s,0)$. For an element $1\in\CG$ to be a unit, we consider the relations $(1-1^*,0)$, $(1g-g,0)$ and $(g1-g,0)$ for every $g\in\CG$.
	\end{example}

	\begin{definition}
		A \emph{representation} of a generating pair $\gp$ is a *-homomorphism $\rho:\falg\to\CB(H)$, where $H$ is a Hilbert space, such that $\|\rho(x)\|\leq\eta$ for every $(x,\eta)\in\CR$. The generating pair $\gp$ is said to be \emph{admissible} if the class $\CCC$ of all representations of $\gp$ is admissible. In this case, we define the universal C*-algebra generated by $\CG$ with relations $\CR$ as $C^*\gp=A(\CG,\CCC)$.
	\end{definition}
	
	\begin{proposition}\label{prop:admissible.norm}
		A generating pair $(\CG,\CR)$ is admissible if and only if for every $g\in\CG$, there is a constant $\lambda_g$ such that $\|\rho(g)\|\leq\lambda_g$ for all representation $\rho$ of $(\CG,\CR)$.
	\end{proposition}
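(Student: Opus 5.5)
We prove the two implications separately. Recall that, by Definition~\ref{admissible}, admissibility of $\gp$ means that for every family $\{\rho_\alpha\}_\alpha$ of representations of $\gp$, the direct sum $\bigoplus_\alpha\rho_\alpha$ is a well-defined representation of $\falg$ which again satisfies all relations in $\CR$.

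For the forward implication, we argue by contradiction, exactly as in the proof of Proposition~\ref{C*-seminorm.well.defined}. Suppose there is $g\in\CG$ with $\sup\{\|\rho(g)\|\}=\infty$, the supremum taken over all representations $\rho$ of $\gp$. Then for each $n\in\nn$ we can choose a representation $\rho_n:\falg\to B(H_n)$ with $\|\rho_n(g)\|\geq n$. The operator $\bigoplus_n\rho_n(g)$ on $\bigoplus_n H_n$ is then unbounded. So $\bigoplus_n\rho_n$ is not a well-defined representation into $B(\bigoplus_n H_n)$, contradicting admissibility. Hence the constants $\lambda_g$ exist.

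For the converse, assume $\|\rho(g)\|\leq\lambda_g$ for all representations $\rho$ and all $g\in\CG$. Let $\{\rho_\alpha:\falg\to B(H_\alpha)\}_\alpha$ be a family of representations of $\gp$. The argument has three steps.
\begin{enumerate}
\item For each $g\in\CG$, the operator $T_g:=\bigoplus_\alpha\rho_\alpha(g)$ is bounded with $\|T_g\|=\sup_\alpha\|\rho_\alpha(g)\|\leq\lambda_g$. By the universal property of the free $*$-algebra $\falg$, the assignment $g\mapsto T_g$ extends to a unique $*$-homomorphism $\rho:\falg\to B(\bigoplus_\alpha H_\alpha)$.
\item We identify $\rho$ with the coordinatewise direct sum. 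The map $a\mapsto\bigoplus_\alpha\rho_\alpha(a)$ agrees with $\rho$ on $\CG$. Moreover, the set of $a\in\falg$ on which the two agree, with $\bigoplus_\alpha\rho_\alpha(a)$ bounded, is closed under sums, scalar multiples, products and adjoints, because these operations act coordinatewise on block-diagonal bounded operators. This set therefore contains the $*$-algebra generated by $\CG$, which is all of $\falg$. Hence $\bigoplus_\alpha\rho_\alpha=\rho$ is a well-defined representation.
\item We check the relations. For $(x,\eta)\in\CR$ we have $\|\rho(x)\|=\sup_\alpha\|\rho_\alpha(x)\|\leq\eta$, since the norm of a block-diagonal operator is the supremum of the norms of its blocks. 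So $\rho$ is a representation of $\gp$.
\end{enumerate}
This shows $\gp$ is admissible.

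The main point requiring care is step (2) of the converse, that the direct sum is well-defined on all of $\falg$ and not only on $\CG$. Boundedness on the generators propagates through the algebraic operations because the norm of a block-diagonal operator is the supremum of the block norms. One minor detail is the empty family, whose direct sum is the zero representation; it trivially satisfies the relations since $\eta\geq0$.
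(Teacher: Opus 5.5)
Your proof is correct and follows essentially the paper's route. The forward direction is the argument of Proposition~\ref{C*-seminorm.well.defined} specialised to $a=g$; the paper simply cites that proposition and takes $\lambda_g=\vvvert g\vvvert$. The converse uses the same two facts as the paper: boundedness propagates from the generators to all of $\falg$, and $\bigl\|\bigoplus_\alpha\rho_\alpha(x)\bigr\|=\sup_\alpha\|\rho_\alpha(x)\|\leq\eta$. The only difference is packaging: the paper first obtains a uniform bound $\lambda_y$ for each $y\in\falg$ over all representations, whereas you run a $*$-subalgebra closure argument for the given family, which amounts to the same thing.
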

	
	\begin{proof}
		If $\gp$ is admissible, we can use Proposition \ref{C*-seminorm.well.defined} and equation \eqref{eq:C*-seminorm} to set $\lambda_g=\vvvert g\vvvert$.
		
		Suppose that there exist the constants $\lambda_g$. Using properties of norm, it is easy to check that for every $y\in\falg$, there exists $\lambda_y\in\rn_+$ such that $\|\rho(y)\|\leq\lambda_y$ for all representation $\rho$ of $(\CG,\CR)$. In particular, for any family of representation $\{\rho_\alpha\}_\alpha$ of $(\CG,\CR)$, the direct sum $\rho_\alpha$ is a well-defined *-homomorphism. And for every $(x,\eta)\in\CR$, we have that
		\[\left\|\bigoplus_{\alpha}\rho_{\alpha}(x)\right\|=\sup_{\alpha}\|\rho_\alpha(x)\|\leq\eta.\]
		Hence the class of all representations of $\gp$ is admissible.
	\end{proof}
	
	\begin{proposition}
		For every pair $(x,\eta)\in\CR$, the inequality $\|x\|\leq \eta$ holds in  $C^*\gp$.
	\end{proposition}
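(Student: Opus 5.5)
The argument is a direct computation from the definitions of $\vvvert\cdot\vvvert$ and of the norm on $\falg/\CI$. Here the statement "$\|x\|\leq\eta$ holds in $C^*\gp$" means that the image $x+\CI$ of $x$ in $A(\CG,\CCC)=C^*\gp$ has norm at most $\eta$. Equivalently, if $\hat\iota:\falg\to C^*\gp$ denotes the $*$-homomorphism extending $\iota$ (which is the quotient map followed by the inclusion into the completion), then $\|\hat\iota(x)\|\leq\eta$.

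The first step is to record that, for every $a\in\falg$,
\[\|a+\CI\|=\inf\{\vvvert a+b\vvvert:b\in\CI\}\leq\vvvert a\vvvert,\]
by taking $b=0\in\CI$. In fact equality holds, since $\vvvert a+b\vvvert\geq\vvvert a\vvvert-\vvvert b\vvvert=\vvvert a\vvvert$ for $b\in\CI$, but only the inequality is needed. The norm on the completion restricts to this norm on $\falg/\CI$, so no further work is required there.

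The second step is to bound $\vvvert x\vvvert$ for $(x,\eta)\in\CR$. By \eqref{eq:C*-seminorm}, $\vvvert x\vvvert=\sup_{p\in\CP_\CCC}p(x)$, where each $p\in\CP_\CCC$ has the form $p(a)=\|\rho(a)\|$ for some $\rho\in\CCC$. Here $\CCC$ is the class of all representations of $\gp$. By the definition of a representation of $\gp$, every such $\rho$ satisfies $\|\rho(x)\|\leq\eta$. Hence every $p\in\CP_\CCC$ satisfies $p(x)\leq\eta$, and so $\vvvert x\vvvert\leq\eta$. One edge case needs care: if $\CP_\CCC$ were empty the supremum would not be meaningful, but the construction of $A(\CG,\CCC)$ already assumes $\CCC$ is non-empty. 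In any case, the zero representation on the zero Hilbert space always belongs to $\CCC$, since $\eta\geq0$.

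Combining the two steps gives $\|x+\CI\|\leq\vvvert x\vvvert\leq\eta$ in $C^*\gp$. I do not expect any real obstacle. The only point needing attention is to interpret "$\|x\|$ in $C^*\gp$" as the norm of the image of $x$ under the canonical map $\falg\to\falg/\CI\subseteq C^*\gp$, and to check that the norm on the completion agrees with the quotient norm on the dense subalgebra.
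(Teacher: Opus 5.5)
Your proposal is correct and follows essentially the paper's argument: you bound $\vvvert x\vvvert\leq\eta$ directly from the definition of a representation of $\gp$, and then pass to the quotient norm. The only difference is cosmetic. The paper bounds every $\vvvert x+z\vvvert$ with $z\in\CI$ using the triangle inequality, whereas you simply take $z=0$, which already suffices for the infimum.
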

	
	\begin{proof}
		For each $z\in\CI=\{a\in\falg:\vvvert a\vvvert =0\}$, we have $\vvvert x+z\vvvert \leq\vvvert x\vvvert +\vvvert z\vvvert =\vvvert x\vvvert\leq\eta$, since for every representation $\rho$ of $\gp$, $\|\rho(x)\|\leq \eta$. Hence, $\|x\|\leq \eta$ in $C^*\gp$.
	\end{proof}
	
	\begin{theorem}
		Let $\gp$ be a generating pair, $A$ be a C*-algebra and $f:\CG\to A$ a function. If the unique *-homomorphism $\hat{f}:\falg\to A$ is such that $\|\hat{f}(x)\|\leq\eta$ for every $(x,\eta)\in\CR$, then there exists a unique *-homomorphism $\tilde{f}:C^*\gp\to A$ such that $\tilde{f}(g)=f(g)$ for every $g\in\CG$.
	\end{theorem}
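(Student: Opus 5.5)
Represent $A$ faithfully on a Hilbert space, compare the resulting seminorm on $\falg$ with $\vvvert\cdot\vvvert$, and then pass to the quotient and extend by continuity. The statement implicitly assumes that $\gp$ is admissible, since otherwise $C^*\gp$ is not defined, so we work under that assumption.

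\textbf{Step 1: $\hat{f}$ is dominated by $\vvvert\cdot\vvvert$.} By the Gelfand--Naimark theorem there is a faithful representation $\pi:A\to B(H)$, which is isometric. Then $\rho:=\pi\circ\hat{f}:\falg\to B(H)$ is a *-homomorphism. For every $(x,\eta)\in\CR$,
\[\|\rho(x)\|=\|\hat{f}(x)\|\leq\eta.\]
So $\rho$ is a representation of $\gp$, and the seminorm $a\mapsto\|\rho(a)\|$ lies in $\CP_\CCC$. By the definition \eqref{eq:C*-seminorm}, for every $a\in\falg$,
\[\|\hat{f}(a)\|=\|\rho(a)\|\leq\vvvert a\vvvert.\]
This inequality is the key step; everything else is routine.

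\textbf{Step 2: pass to the quotient.} Step 1 gives $\CI\scj\ker\hat{f}$. Hence $\hat{f}$ factors through a *-homomorphism $f_0:\falg/\CI\to A$ with $f_0(a+\CI)=\hat{f}(a)$. For any $b\in\CI$,
\[\|f_0(a+\CI)\|=\|\hat{f}(a+b)\|\leq\vvvert a+b\vvvert,\]
and taking the infimum over $b$ gives $\|f_0(a+\CI)\|\leq\|a+\CI\|$. So $f_0$ is contractive, hence uniformly continuous. It therefore extends uniquely to a bounded linear map $\tilde{f}:C^*\gp\to A$ on the completion. By continuity of the operations, $\tilde{f}$ is multiplicative and preserves the involution, so it is a *-homomorphism. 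By construction $\tilde{f}(\iota(g))=\hat{f}(g)=f(g)$ for every $g\in\CG$.

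\textbf{Step 3: uniqueness.} Suppose $\psi:C^*\gp\to A$ is another *-homomorphism with $\psi(\iota(g))=f(g)$ for all $g\in\CG$. Then $\psi$ and $\tilde{f}$ agree on $\iota(\CG)$ and on the adjoints of these elements. Being *-homomorphisms, they therefore agree on the *-algebra $\falg/\CI$ generated by $\iota(\CG)$. Every *-homomorphism between C*-algebras is contractive, hence continuous. Since $\falg/\CI$ is dense in $C^*\gp$, it follows that $\psi=\tilde{f}$.
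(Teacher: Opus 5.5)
Your proof is correct and follows the paper's argument: you realize $A$ concretely through a faithful representation, so that $\hat{f}$ becomes a representation of $\gp$ and $\|\hat{f}(y)\|\leq\vvvert y\vvvert$. You then factor through $\falg/\CI$ as a contractive map, extend to the completion, and obtain uniqueness from density of the *-polynomials in the generators. Your version simply spells out details the paper leaves terse: the quotient-norm estimate, why the extension is a *-homomorphism, and the implicit admissibility assumption.
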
 
	
	\begin{proof}
		Because every C*-algebra can be thought of as being a concrete C*-algebra (that is, a sub-C*-algebra of $B(H)$ for some Hilbert space $H$), $\hat{f}$ is a representation of $\gp$, so that for every $y\in\falg$, we have $\|\hat{f}(y)\|\leq \vvvert y\vvvert$. In particular $\CI=\{a\in\falg:\vvvert a\vvvert =0\}\scj\ker(\hat{f})$. The map can be factored through the quotient, and it is contractive. Hence, it can be extended to its completion. The uniqueness follows from the fact that any element of $C^*\gp$ can be approximated by polynomials on the generators and its involutions.
	\end{proof}

	\begin{example}
		Consider the pair $\gp$ given by $\CG=\{1,x\}$ and \[\CR=\{(1^2-1,0),(1^*-1,0),(1x-x,0),(x1-x,0),(x^+x-xx^+,0),(x,1)\}.\] The pair is admissible by Proposition \ref{prop:admissible.norm} by taking $\lambda_1=1$ and $\lambda_x=1$. We say that $C^*\gp$ is the unital universal C*-algebra generated by $x$ with relations $x^+x=xx^+$ and $\|x\|\leq 1$.
		
		Let $\overline{\mathbb{D}}$ be the closed disc and $z$ the identity function on it. By the universal property, there exists a *-homomorphism $\varphi:C^*\gp\to C(\overline{\mathbb{D}})$ such that $\varphi(x)=z$. It is surjective by the Stone-Weierstrass theorem. Because $\|x\|\leq 1$, we have that $\sigma(x)\scj\overline{\mathbb{D}}$. Hence, for any  polynomial $f\in C[y,y^*]$ on variables $y$ and $y^*$, we have that $\|f(x)\|\leq \|f(z)\|$. The inequality $\|f(z)\|\leq\|f(x)\|$ follows from the fact that $\varphi$ is contractive. That means that $\varphi$ is an isometry on a dense subset of $C^*\gp$ and hence injective.
	\end{example}
	
	\begin{example}
		Consider $A$ the universal C*-algebra generated by an isometry $s$. The corresponding generating pair is admissible since every isometry has norm bounded by 1. Let $S$ be the shift operator on $\ell^2(\nn)$. Then there is a *-homomorphism $\varphi:A\to B(\ell^2(\nn))$ such that $\varphi(s)=S$. In particular, $s$ is not unitary. Taking any faithful representation of $A$ and applying Coburn's theorem \cite{Coburn}, we have that $A\cong C^*(S)\cong\CT$.
	\end{example}
	
	\begin{example}
		The pair $(\{x\},\{(x-x^*,0)\})$ is not admissible. This follows from the fact that each real number is self-adjoint in $\cn$ and there is no bound on the norm of all real numbers.
	\end{example}
	
	\subsection{SOT relations}\label{s:SOT.relation} In this section, we consider relations described via convergence with respect to the strong operator topology, as done in \cite{InfiniteSum}. One of the author's and Boava's main motivation for this construction is the possibility to generalize the Cuntz-Li construction of a C*-algebra of an integral domain without the assumption that each quotient of the domain by a principal ideal is finite done in \cite[Section 2]{CuntzLi}. This generalization will be explored in a future work.
	
	\begin{definition}
		Let $\CG$ be a set and $\falg$ the free *-algebra generated by $\CG$. A \emph{SOT relation} is a net $(x_i)_{i\in I}$. A representation $\rho:\falg\to B(H)$ \emph{satisfies the relation $(x_i)_{i\in I}$} if $s-\lim \rho(x_i)=0$, where $s-\lim$ represents the limit in the strong operator topology. A \emph{generating triple} is a triple $\triple$, where $\CG$ is a set, $\CR_N$ is a set of norm relations and $\CR_S$ is a set of SOT relations.
	\end{definition}
	
	We now consider norm relations together with SOT relations.
	
	\begin{definition}
		A representation of a generating triple $\triple$ is a *-homomorphism $\rho:\falg\to\CB(H)$ satisfying the relations in $\CR_N$ and $\CR_S$, where $H$ is a Hilbert space. The generating triple $\triple$ is said to be admissible if the class $\CCC$ of all representations of $\triple$ is admissible. In this case, we define the universal C*-algebra generated by $\CG$ with relations $\CR_N$ and $\CR_S$ as $C^*\triple=A(\CG,\CCC)$.
	\end{definition}
	
	We prove a sufficient condition for the admissibility of a generating triple.
	
	\begin{proposition}
		Let $\triple$ be a generating triple. If there are constants $\lambda_g$ for each $g\in G$ and $\lambda_{\bx}$ for each $\bx=(x_i)_{i\in I}\in\CR_S$ such that for every representation $\rho$ of $\triple$, we have $\|\rho(g)\|\leq\lambda_g$ for all $g\in G$, and $\|\rho(x_i)\|\leq\eta_{\bx}$ for all $\bx=(x_i)_{i\in I}\in\CR_S$ and all $i\in I$, then the $\triple$ is admissible.
	\end{proposition}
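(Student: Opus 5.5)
The plan is to follow the proof of Proposition \ref{prop:admissible.norm}. We need that for any family $\{\rho_\alpha:\falg\to B(H_\alpha)\}_\alpha$ of representations of $\triple$, the direct sum $\rho=\bigoplus_\alpha\rho_\alpha$ is a well-defined representation on $H=\bigoplus_\alpha H_\alpha$ that again satisfies the relations in $\CR_N$ and in $\CR_S$.

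\textbf{Step 1: well-definedness.} From the bounds $\|\rho(g)\|\leq\lambda_g$, every $y\in\falg$ gets a uniform bound $\lambda_y$. Write $y$ as a finite linear combination of words in $\CG\cup\CG^\dagger$ and use $\|\rho(g^*)\|=\|\rho(g)\|$, submultiplicativity and the triangle inequality. Hence $\sup_\alpha\|\rho_\alpha(y)\|<\infty$, so $\bigoplus_\alpha\rho_\alpha(y)$ is a bounded operator on $H$. That $\rho$ is a $*$-homomorphism is checked coordinatewise. The norm relations hold exactly as before, since $\|\rho(x)\|=\sup_\alpha\|\rho_\alpha(x)\|\leq\eta$.

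\textbf{Step 2: the SOT relations.} This is the main point. Fix $\bx=(x_i)_{i\in I}\in\CR_S$ and $\xi=(\xi_\alpha)_\alpha\in H$, and let $\varepsilon>0$. Since $\sum_\alpha\|\xi_\alpha\|^2<\infty$, choose a finite set $F$ of indices with $\sum_{\alpha\notin F}\|\xi_\alpha\|^2<\varepsilon^2$. For each $\alpha\in F$, $\rho_\alpha(x_i)\xi_\alpha\to0$. Because $I$ is directed and $F$ is finite, there is $i_0$ such that for $i\geq i_0$ we have $\|\rho_\alpha(x_i)\xi_\alpha\|<\varepsilon/\sqrt{|F|}$ for all $\alpha\in F$. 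Then for $i\geq i_0$,
\[\|\rho(x_i)\xi\|^2=\sum_{\alpha\in F}\|\rho_\alpha(x_i)\xi_\alpha\|^2+\sum_{\alpha\notin F}\|\rho_\alpha(x_i)\xi_\alpha\|^2\leq\varepsilon^2+\lambda_{\bx}^2\varepsilon^2 .\]
Here we use the hypothesis $\|\rho_\alpha(x_i)\|\leq\lambda_{\bx}$, uniform in $\alpha$ and $i$; the statement writes this constant as $\eta_{\bx}$, which I read as $\lambda_{\bx}$. Thus $\rho(x_i)\to0$ strongly.

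The tail estimate is the only place where the uniform bound on the net is essential. Without it, the strong convergence in each summand need not pass to the direct sum, so this is the step I expect to require care.

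With these checks, $\rho$ belongs to the class of all representations of $\triple$, and the triple is admissible.
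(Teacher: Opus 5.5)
Your proof is correct and follows essentially the same route as the paper. The norm relations pass to the direct sum as in Proposition~\ref{prop:admissible.norm}, and the SOT relations pass to it by approximating $\xi$ with a finitely supported vector and controlling the remainder through the uniform bound $\lambda_{\bx}$, which you rightly read in place of the typo $\eta_{\bx}$. Your explicit splitting of $\|\rho(x_i)\xi\|^2$ over $F$ and its complement is just a quantitative version of the paper's triangle-inequality estimate with $\xi-\xi'$.
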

	
	\begin{proof}
		Let $\{\rho_\alpha\}_{\alpha}$ be a family of representations of $\triple$. As in Proposition \ref{prop:admissible.norm}, the direct sum $\rho:=\bigoplus_\alpha\rho_\alpha$ is a representation of $\falg$ satisfying the norm relations. It remains to prove that $\rho$ also preserves the SOT relations.
		
		For each $\xi,\xi'\in\bigoplus_\alpha H_\alpha$, we have that $\left\|\bigoplus_\alpha \rho_\alpha(x_i)(\xi-\xi')\right\|\leq \eta_\bx\|\xi-\xi'\|$ for all $i\in I$. Moreover, if $\xi'$ is supported in a finite amount of indices, then there for each $\varepsilon >0$, there exists $i_0\in I$ such that $\left\|\bigoplus_\alpha \rho_\alpha(x_i)(\xi')\right\|\leq \varepsilon$ for all $i\geq i_0$. Since $\xi$ can be arbitrarily approximated by an element with finite support, we see that $\left\|\bigoplus_\alpha \rho_\alpha(x_i)(\xi)\right\|$ can be made arbitrarily small for sufficiently large $i\in I$.
	\end{proof}

	\begin{theorem}\label{thm:universal.property}
		Let $\triple$ be an admissible generating triple and let $A$ be a C*-algebra. If $\varphi:\falg\to A$ is a *-homomorphism such that there exists a faithful representation $\pi:A\to B(H)$ of $A$ on a Hilbert space $H$ for which $\pi\circ\varphi$ is a representation of $\triple$, then there is a unique *-homomorphism $\widetilde{\varphi}:C^*\triple\to A$ such that $\varphi=\widetilde{\varphi}\circ i$.
	\end{theorem}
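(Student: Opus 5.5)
The plan is to repeat the argument used for norm relations: show that $\varphi$ is dominated by the C*-seminorm $\vvvert\cdot\vvvert$ of \eqref{eq:C*-seminorm}, and then pass to the quotient and to the completion. Here $i$ denotes the canonical map $\falg\to\falg/\CI\subseteq C^*\triple$, extending $\iota$.

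\emph{Step 1 (domination).} By hypothesis, $\pi\circ\varphi:\falg\to B(H)$ is a representation of $\triple$, so it belongs to the class $\CCC$ of all representations of $\triple$. Hence the seminorm $a\mapsto\|\pi(\varphi(a))\|$ lies in $\CP_\CCC$. Since $\pi$ is a faithful representation of a C*-algebra, it is isometric, and therefore
\[\|\varphi(a)\|=\|\pi(\varphi(a))\|\leq\vvvert a\vvvert\quad\text{for all }a\in\falg.\]
This is the key point where faithfulness of $\pi$ is used. For SOT relations we cannot simply represent $A$ arbitrarily, since the SOT condition depends on the chosen representation. That is exactly why the hypothesis is phrased in terms of a specific $\pi$, and once $\pi$ is isometric the inequality follows immediately.

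\emph{Step 2 (factoring and extending).} From Step 1 we get $\CI=\{a:\vvvert a\vvvert=0\}\scj\ker\varphi$, so $\varphi$ factors through a *-homomorphism $\varphi_0:\falg/\CI\to A$. For any $b\in\CI$ we have $\|\varphi(a)\|=\|\varphi(a+b)\|\leq\vvvert a+b\vvvert$. Taking the infimum over $b\in\CI$ gives $\|\varphi_0(a+\CI)\|\leq\|a+\CI\|$, so $\varphi_0$ is contractive. It therefore extends by continuity to a contractive linear map $\widetilde\varphi$ on the completion $C^*\triple$. Multiplicativity and preservation of the involution pass to the limit by continuity, so $\widetilde\varphi$ is a *-homomorphism, and by construction $\widetilde\varphi\circ i=\varphi$.

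\emph{Step 3 (uniqueness).} Suppose $\psi:C^*\triple\to A$ is another *-homomorphism with $\psi\circ i=\varphi$. Then $\psi$ and $\widetilde\varphi$ agree on $i(\falg)=\falg/\CI$, which is dense in $C^*\triple$. Both maps are continuous, since *-homomorphisms between C*-algebras are contractive, so they agree everywhere.

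The only real obstacle is Step 1. It amounts to the observation that $\pi\circ\varphi$ is a member of the class over which the supremum defining $\vvvert\cdot\vvvert$ is taken, combined with the fact that $\pi$ is isometric. The rest of the argument is identical to the proof of the corresponding theorem for norm relations.
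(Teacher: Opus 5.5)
Your proposal is correct and follows the same route as the paper. You note that $\pi\circ\varphi$ belongs to the class defining $\vvvert\cdot\vvvert$ and use that the faithful $\pi$ is isometric to get $\|\varphi(a)\|\leq\vvvert a\vvvert$, hence $\CI\scj\ker\varphi$; then you factor through $\falg/\CI$ and extend to the completion, with your density argument for uniqueness spelling out a step the paper leaves implicit.
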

	
	\begin{proof}
		Note that $\pi\circ\varphi$ is contractive and vanishes on $\CI$ by the definition of $\vvvert\cdot\vvvert$. Using that $\pi:A\to B(H)$ is faithful, we can define $\widetilde{\varphi}:C^*\triple\to A$ asking that $\widetilde{\varphi}(x+\CI)=\varphi(x)$ for $x\in\falg$ and then passing to the completion.
	\end{proof}
	
	We now show that there exists a faithful representation of $C^*\triple$ such that the composition with the natural map $\iota:\falg\to C^*\triple$ is a representation of $\triple$ satisfying the relations in $\CR_N$ and $\CR_S$. It is worth pointing out that not all faithful representations of $C^*\triple$ have this property (see \cite[Section~3.1]{InfiniteSum}).
	
	\begin{theorem}\label{thm:faithful.rep}
		Let $\triple$ be an admissible generating triple. Then, there exists a faithful representation $\pi_u:C^*\triple\to B(H_u)$ on a Hilbert space $H_u$ such that $\pi_u\circ \iota$ is a representation of $\triple$.
	\end{theorem}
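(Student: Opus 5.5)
The plan is to build $\pi_u$ as a direct sum of \emph{enough} representations of $\triple$ to recover the norm $\vvvert\cdot\vvvert$, and then use admissibility to see that this direct sum is itself a representation of $\triple$. The only set-theoretic subtlety is that $\CCC$ is a class. To get around it, I would use the set $\CP_\CCC$: for each $p\in\CP_\CCC$, choose one representation $\rho_p:\falg\to B(H_p)$ in $\CCC$ with $p(a)=\|\rho_p(a)\|$ for all $a\in\falg$. This uses the axiom of choice over the set $\CP_\CCC$. Put $H_u=\bigoplus_{p\in\CP_\CCC}H_p$ and $\rho_u=\bigoplus_p\rho_p$.

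Since $\triple$ is admissible, Definition~\ref{admissible} says $\rho_u$ is a well-defined representation in $\CCC$. So it is a representation of $\triple$: it satisfies the norm relations and the SOT relations. Moreover, for every $a\in\falg$,
\[\|\rho_u(a)\|=\sup_{p\in\CP_\CCC}\|\rho_p(a)\|=\sup_{p\in\CP_\CCC}p(a)=\vvvert a\vvvert.\]
Note that $\CP_\CCC$ is non-empty if $\CCC$ is (and $\CCC$ is non-empty, e.g.\ the zero representation). If $\CCC$ were empty, the statement would be vacuous or handled trivially by the zero algebra.

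Next, $\rho_u$ vanishes on $\CI$, so it factors through a $*$-homomorphism $\falg/\CI\to B(H_u)$. This map satisfies $\|\rho_u(a)\|=\vvvert a\vvvert$. The quotient norm $\|a+\CI\|$ also equals $\vvvert a\vvvert$, because $\vvvert a+b\vvvert=\vvvert a\vvvert$ for $b\in\CI$ by the triangle inequality. Hence the factored map is isometric. It therefore extends by continuity to an isometric $*$-homomorphism $\pi_u:C^*\triple\to B(H_u)$, which is faithful.

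By construction $\pi_u\circ\iota=\rho_u$ on $\falg$. Here I read $\iota:\falg\to C^*\triple$ as the canonical map $a\mapsto a+\CI$. Since $\rho_u$ is a representation of $\triple$, this gives the theorem.

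The main obstacle is really only the one flagged above: choosing a \emph{set} of representations realizing all of $\vvvert\cdot\vvvert$, which is handled by indexing over $\CP_\CCC$. Everything else is the SOT compatibility of direct sums, and that is exactly what admissibility of $\triple$ provides.
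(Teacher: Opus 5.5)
Your argument is correct and follows essentially the same route as the paper: you take a direct sum of enough representations of $\triple$ to realize $\vvvert\cdot\vvvert$, use admissibility to see that the direct sum is again a representation of $\triple$, and get faithfulness from the resulting isometry on the dense image of $\falg/\CI$. The paper instead indexes over $\falg\setminus\CI$, choosing for each $x$ a representation $\rho_x$ with $\|\rho_x(x)\|=\vvvert x\vvvert$; your indexing over the set $\CP_{\CCC}$ avoids having to show that each supremum is attained, and gives $\|\rho_u(a)\|=\vvvert a\vvvert$ for all $a$ at once.
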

	
	\begin{proof}[Proof idea]
		The idea of the proof is first to use the definition of the seminorm $\vvvert\cdot\vvvert$ in $\falg$, to find a representation $\rho_x$ of the triple $\triple$ for each $x\in\falg\setminus\CI$ such that $\vvvert x\vvvert=\|\rho_x\|$. Since the triple $\triple$ is admissible, we can consider the direct sum representation of the triple $\rho_u=\bigoplus_{x\in\falg\setminus\CI}\rho_x:\falg\to B(H_u)$, where $H_u$ is taken to be the direct sum of the Hilbert spaces of each representation $\rho_x$. By Theorem \ref{thm:universal.property}, there exists a representation $\pi_u:C^*\triple\to B(H_u)$ such that $\pi_u\circ\iota =\rho_u$,. Then, we use that any non-zero element of $C^*\triple$ can be sufficiently approximated by $\iota(x)$ for some $x\in\falg\setminus\CI$ to prove that $\pi_u(a)\neq 0$. For the details of the proof, see \cite[Theorem~2.15]{InfiniteSum}
	\end{proof}
	
	\begin{proposition}
		If $B$ is a C*-algebra satisfying the universal property of $C^*\triple$, then $B\cong C^*\triple$.
	\end{proposition}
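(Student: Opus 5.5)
We first need to say what "satisfying the universal property of $C^*\triple$" means. We take it to mean that $B$ is a C*-algebra with a *-homomorphism $j:\falg\to B$ such that two things hold. First, there is a faithful representation $\pi_B:B\to B(K)$ with $\pi_B\circ j$ a representation of $\triple$. Second, for every C*-algebra $A$ and *-homomorphism $\varphi:\falg\to A$ admitting a faithful representation $\pi$ with $\pi\circ\varphi$ a representation of $\triple$, there is a unique *-homomorphism $\widetilde{\varphi}:B\to A$ with $\widetilde{\varphi}\circ j=\varphi$. The plan is then the standard argument of Theorem \ref{thm:unique.free.semigroup}, with Theorem \ref{thm:faithful.rep} supplying the hypothesis that Theorem \ref{thm:universal.property} needs.

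\textbf{Step 1: maps in both directions.} By Theorem \ref{thm:faithful.rep} there is a faithful $\pi_u$ on $C^*\triple$ such that $\pi_u\circ\iota$ is a representation of $\triple$. So $\iota:\falg\to C^*\triple$ satisfies the hypothesis of the universal property of $B$, which yields $\alpha:B\to C^*\triple$ with $\alpha\circ j=\iota$. Conversely, the existence of $\pi_B$ lets us apply Theorem \ref{thm:universal.property} to $\varphi=j$, giving $\beta:C^*\triple\to B$ with $\beta\circ\iota=j$.

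\textbf{Step 2: the composites are identities.} We have $\alpha\circ\beta\circ\iota=\alpha\circ j=\iota$. The identity map also satisfies $\operatorname{Id}\circ\iota=\iota$. Apply the uniqueness in Theorem \ref{thm:universal.property}, with $A=C^*\triple$ and $\varphi=\iota$; this is allowed because $\pi_u$ is a faithful representation making $\pi_u\circ\iota$ a representation of the triple. It gives $\alpha\circ\beta=\operatorname{Id}$.

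Symmetrically, $\beta\circ\alpha\circ j=j$. The uniqueness in the universal property of $B$, applied to $\varphi=j$ (legitimate because of $\pi_B$), gives $\beta\circ\alpha=\operatorname{Id}_B$. Hence $\beta$ is a *-isomorphism, and it intertwines the generators.

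\textbf{Main obstacle.} The only delicate point is Step 1's need to feed $\iota$ itself into a universal property. That requires a faithful representation of $C^*\triple$ that respects the SOT relations, and this is not automatic: as noted before Theorem \ref{thm:faithful.rep}, not every faithful representation works. This is exactly what Theorem \ref{thm:faithful.rep} provides, and the analogous fact for $B$ is built into the definition. Uniqueness in either universal property also holds concretely, since the image of $\falg$ is dense and *-homomorphisms are continuous.
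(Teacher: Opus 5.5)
Your proof is correct and is the standard two-sided universal-property argument of Theorem~\ref{thm:unique.free.semigroup}, which is what the paper invokes and leaves to the reader. You also make explicit the point the paper leaves implicit: Theorem~\ref{thm:faithful.rep} is what allows $\iota$ itself to be fed into the universal property, and the analogous condition on $j$ (a faithful representation making it a representation of the triple) has to be part of what it means for $B$ to have that property.
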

	
	\begin{proof}
		The proof follows similar arguments to those in the proof of Theorem \ref{thm:unique.free.semigroup} and is left to the reader.
	\end{proof}
	
	\begin{example}\label{ex:graph}
		Let $E=(E^0,E^1,r,s)$ be a graph, where $E^0$ is the set of vertices, $E^1$ is the set of edges, $s:E^1\to E^0$ is the source map and $r:E^1\to E^0$ is the range map. A vertex $v$ is called a sink if $|s^{-1}(v)|=0$, a regular vertex if $0<|s^{-1}(v)|<\infty$ and an infinite emitter if $|s^{-1}(v)|=\infty$. The sets of sinks, regular vertices and infinite emitters are denoted respectively by $E^0_{sink}$, $E^0_{rg}$ and $E^0_{inf}$.
		Following \cite{Graph2}, $C^*(E)$ is the universal C*-algebra generated by mutually orthogonal projections $\{p_v\}_{v\in E^0}$ and partial isometries with mutually orthogonal final projections $\{s_e\}_{e\in E^1}$ satisfying:
		\begin{itemize}
			\item (CK1) $s_e^*s_e=p_{r(e)}$ for all $e\in E^1$,
			\item (CK2) $s_es_e^*\leq p_{s(e)}$ for all $e\in E^1$,
			\item (CK3) $p_v=\sum_{e\in s^{-1}(v)}s_es_e^*$ for all $v\in E^0_{rg}$.
		\end{itemize}
		
		Let $A$ be the universal C*-algebra generated by mutually orthogonal projections $\{q_v\}_{v\in E^0}$ and partial isometries with mutually orthogonal final projections $\{t_e\}_{e\in E^1}$ satisfying:
		\begin{itemize}
			\item $t_e^*t_e=q_{r(e)}$ for all $e\in E^1$,
			\item $q_v=\sum_{e\in s^{-1}(v)}t_et_e^*$ for all $v\in E^0\setminus E^0_{sink}$.
		\end{itemize}
		For $v\in E^0_{inf}$ the last relation is to be interpreted as the net $(p_v-\sum_{e\in F}s_es_e^*)_{F\in\CP_{fin}(s^{-1}(v))}$. First, we notice that the triple describing $A$ is admissible. Indeed, projections and partial isometries have norm less or equal than 1. And for every $v\in E^0_{inf}$ and $F\scj s^{-1}(v)$, we have that $p_v-\sum_{e\in F}s_es_e^*$ has norm at most 2.
		
		We build a homomorphism from $C^*(E)$ to $A$ that sends $p_v$ to $q_v$ and $s_e$ to $t_e$ for each $v\in E^0$ and $e\in E^1$. Most relations defining $C^*(E)$ are included in the relations defining $A$. The only thing that remains to prove is that $t_et_e^*\leq q_{s(e)}$ for all $e\in E^1$. Let $\pi_u$ be a representation as in Theorem \ref{thm:faithful.rep}. Call $T_e=\pi_u(t_e)$ and $Q_{s(e)}=\pi_u(q_{s(e)})$. Because, $\pi_u$ is faithful, it is sufficient to prove that $T_eT_e^*\leq Q_{s(e)}$, that is, $T_eT_e^*=Q_{s(e)}T_eT_e^*$. The limit below will be taken over the directed set $\CP_{fin}(s^{-1}(v))$. For any $\xi\in H_u$, we have
		\[Q_{s(e)}T_eT_e^*\xi=\lim_F\sum_{f\in F}T_fT_f^*T_eT_e^*\xi=\lim_FT_eT_e^*\xi=T_eT_e^*\xi.\]
		Since $\xi$ was arbitrary, the equality $T_eT_e^*\leq Q_{s(e)}$ holds.
		
		By the universal property of $C^*(E)$ there exists a homomorphism $\Phi:C^*(E)\to A$ such that $\Phi(p_v)=q_v$ and $\Phi(s_e)=t_e$. Since all the generators of $A$ are in the image, $\Phi$ is surjective.
		
		In order to prove that $\Phi$ is injective, we use the gauge-invariance theorem. For that, we need a strongly continuous action of $\T$ on $A$. Given $z\in\T$, we consider the *-homomorphism $\gamma_z:\falg\to A$ given by $\gamma_z(p_v)=p_v$ and $\gamma_z(s_e)=zs_e$. It is straightforward to check all norm relations. As for the SOT relations, we use a faithful representation $\pi_u$ as before. It is then easy to see that $\sum_{e\in s^{-1}(v)}(zT_e)(zT_e)^*$ converges to $P_v$ in the strong operator topology. Clearly $\gamma_{\overline{z}}=\gamma_z^{-1}$, so that we have an action of $\T$ on $A$. A typical $\varepsilon/3$ shows that the action is strongly continuous.
		
		Moreover, if $\sigma$ is the gauge action on $C^*(E)$, it is also immediate that $\Phi\circ\sigma_z=\gamma_z\circ\Phi$ for all $z\in\T$. By the gauge-invariance theorem \cite[Theorem~2.1]{Graph3}, $\Phi$ is injective, and hence a *-isomorphism.
	\end{example}


\begin{thebibliography}{10}
		
		\bibitem{UniversalCuntzKrieger}
		A.~an~Huef and I.~Raeburn.
		\newblock The ideal structure of {Cuntz}-{Krieger} algebras.
		\newblock {\em Ergodic Theory Dyn. Syst.}, 17(3):611--624, 1997.
		
		\bibitem{Graph3}
		T.~Bates, J.~H. Hong, I.~Raeburn, and W.~Szyma{\'n}ski.
		\newblock The ideal structure of the {{\(C^*\)}}-algebras of infinite graphs.
		\newblock {\em Ill. J. Math.}, 46(4):1159--1176, 2002.
		
		\bibitem{Blackadar}
		B.~Blackadar.
		\newblock Shape theory for {$C^\ast$}-algebras.
		\newblock {\em Math. Scand.}, 56(2):249--275, 1985.
		
		\bibitem{InfiniteSum}
		G.~Boava and G.~G. de~Castro.
		\newblock Infinite sum relations on universal
		{{\(\mathrm{C}^{\ast}\)}}-algebras.
		\newblock {\em J. Math. Anal. Appl.}, 507(2):16, 2022.
		\newblock Id/No 125850.
		
		\bibitem{Coburn}
		L.~A. Coburn.
		\newblock The {$C^{\ast} $}-algebra generated by an isometry.
		\newblock {\em Bull. Amer. Math. Soc.}, 73:722--726, 1967.
		
		\bibitem{Cuntz}
		J.~Cuntz.
		\newblock Simple {$C\sp*$}-algebras generated by isometries.
		\newblock {\em Comm. Math. Phys.}, 57(2):173--185, 1977.
		
		\bibitem{CuntzKrieger}
		J.~Cuntz and W.~Krieger.
		\newblock A class of {$C^{\ast} $}-algebras and topological {M}arkov chains.
		\newblock {\em Invent. Math.}, 56(3):251--268, 1980.
		
		\bibitem{CuntzLi}
		J.~Cuntz and X.~Li.
		\newblock The regular {$C^\ast$}-algebra of an integral domain.
		\newblock In {\em Quanta of maths}, volume~11 of {\em Clay Math. Proc.}, pages
		149--170. Amer. Math. Soc., Providence, RI, 2010.
		
		\bibitem{ExelLaca}
		R.~Exel and M.~Laca.
		\newblock Cuntz-{K}rieger algebras for infinite matrices.
		\newblock {\em J. Reine Angew. Math.}, 512:119--172, 1999.
		
		\bibitem{Graph2}
		N.~J. Fowler, M.~Laca, and I.~Raeburn.
		\newblock The {{\(C^*\)}}-algebras of infinite graphs.
		\newblock {\em Proc. Am. Math. Soc.}, 128(8):2319--2327, 2000.
		
		\bibitem{Graph1}
		A.~Kumjian, D.~Pask, and I.~Raeburn.
		\newblock Cuntz-{K}rieger algebras of directed graphs.
		\newblock {\em Pacific J. Math.}, 184(1):161--174, 1998.
		
		\bibitem{Loring}
		T.~A. Loring.
		\newblock {{\(C^\ast\)}}-algebra relations.
		\newblock {\em Math. Scand.}, 107(1):43--72, 2010.
		
		\bibitem{LivroMurphy}
		G.~J. {Murphy}.
		\newblock {\em {C\({}^*\)-algebras and operator theory}}.
		\newblock Boston, MA etc.: Academic Press, Inc., 1990.
		
	\end{thebibliography}
\end{document}